\newtheorem{theorem}{Theorem}
\newtheorem{lemma}{Lemma}
\newtheorem{rem}{Remark}
\newtheorem{exmp}{Example}
\begin{document}
\title{On $Z$-monodromies in embedded graphs}
\author{Adam Tyc}
\subjclass[2000]{}
\keywords{central circuit, chess coloring, embedded graph, zigzag, $z$-monodromy}
\subjclass[2020]{05C38, 05C10}

\
\address{Adam Tyc: Faculty of Mathematics and Computer Science, 
University of Warmia and Mazury, S{\l}oneczna 54, 10-710 Olsztyn, Poland}
\email{adam.tyc@matman.uwm.edu.pl}

\maketitle
\begin{abstract}
We characterize all permutations which realize as the $z$-monodro\-mies of faces in connected simple finite graphs embedded in surfaces whose duals are also simple. 
\end{abstract}

\section{Introduction}
{\it Zigzags} are closed walks in embedded graphs which generalize the concept of {\it Petrie polygons} in regular polytopes. 
They were used in computer graphics \cite{TriApp} and in enumerating all combinatorial possibilities for fullerenes in mathematical chemistry \cite{BD,DDS-book}. 
Zigzags are also closely related to {\it Gauss code problem}: if an embedded graph contains a single zigzag, then this zigzag is a geometrical realization of a certain Gauss code (see \cite[Section 17.7]{GR-book} for the planar case and \cite{CrRos,Lins2} for the case when a graph is embedded in an arbitrary surface).
More results on zigzags can be found in \cite{Lins1, PT1, Shank, T1}.

We will consider zigzags in connected simple finite graphs embedded in surfaces whose duals are also simple. 
The latter condition guarantees that for any two consecutive edges on a face there is a unique zigzag containing them. 
This property is the crucial tool in the concept of {\it $z$-monodromy}. 
For a face $F$ the $z$-monodromy $M_F$ is a permutation on the set of all oriented edges of $F$. 
If $e_0,e$ is a pair of consecutive edges in $F$, then $M_F(e)$ is the first oriented edge of $F$ that occurs in the zigzag containing $e_0,e$ after $e$.  
So, $z$-monodromies are transformations of the first recurrence of zigzags to faces.  

Such $z$-monodromies were introduced in \cite{PT2} and exploited to prove that any triangulation of an arbitrary (not necessarily oriented) closed surface can be shredded to a triangulation with a single zigzag. 
There are precisely $7$ types of $z$-monodromies for triangle faces and each of them is realized. 
The properties and some applications of $z$-monodromies of triangle faces can be found in \cite{PT3,T3}. 
See also \cite{T2} for a generalization of $z$-monodromies on pairs of edges. 

Faces of embedded graph under consideration contains at least three edges. 
We characterize permutations that realize as $z$-monodromies of $k$-gonal faces for any $k\geq 3$. 
More precisely, a permutation $\sigma$ on the set 
$$[k]_{\pm}=\{1,\dots,k,-k,\dots,-1\}$$ 
realizes as the $z$-monodromy if and only if it satisfies the following conditions:
\begin{enumerate}
\item[$\bullet$] if $\sigma(i)=j$, then $\sigma(-j)=-i$;
\item[$\bullet$] $\sigma(i)\neq-i$.
\end{enumerate} 
In the plane case, our construction is based on the {\it chess coloring} of $4$-regular plane graphs. 
Using the connected sum of surfaces, we extend the result on the non-planar case. 

We consider the case when an embedded graph and its dual both are simple. 
In the general case, zigzags cannot be reconstructed from pairs of consecutive edges. 
This shows that the concept of $z$-monodromy cannot be generalized in a direct way. 

\section{Zigzags in embedded graphs}

Let $S$ be a connected closed $2$-dimensional (not necessarily orientable) surface. 
Let $\Gamma$ be a $2$-cell embedding of a connected finite graph in $S$, in other words, a {\it map} \cite[Definition 1.3.6]{LZsurf}. 
The difference $S\setminus \Gamma$ is a disjoint union of open disks and the closures of  these disks are the {\it faces}. 
We say that a face is {\it $k$-gonal} if it contains precisely $k$ edges. 
We will always assume that the following condition is satisfied:
\begin{enumerate}
\item[(SS)] $\Gamma$ and the dual map $\Gamma^*$ (see \cite[p.52]{LZsurf})  both are embeddings of simple graphs. 
\end{enumerate}
The fact that one of the graphs is simple does not implies that the same holds for the other graph. 
For example, $\Gamma^*$ is not simple if $\Gamma$ contains a vertex of degree $2$ or two distinct faces with intersection containing more than one edge. 
The condition (SS) implies that each face in our graphs is $k$-gonal with $k\geq 3$.  

A {\it zigzag}  in $\Gamma$ is a {\it sequence} of edges $\{e_{i}\}_{i\in {\mathbb N}}$ satisfying the following conditions for every $i\in {\mathbb N}$: 
\begin{enumerate}
\item[$\bullet$] $e_{i}$ and $e_{i+1}$ are distinct, they have a common vertex and belong to the same face, 
\item[$\bullet$] the faces containing $e_{i},e_{i+1}$ and $e_{i+1},e_{i+2}$ are distinct
and the edges $e_{i}$ and $e_{i+2}$ are non-intersecting.
\end{enumerate} 
Since $\Gamma$ is finite, 
there is a natural number $n>0$ such that $e_{i+n}=e_{i}$ for every natural $i$. 
Thus, every zigzag will be represented as a cyclic sequence $e_{1},\dots,e_{n}$,
where $n$ is the smallest number satisfying this condition.

Any zigzag is completely determined by every pair of consecutive edges contained in this zigzag. 
Conversely, for every pair of distinct edges $e, e'$ which have a common vertex and belong to the same face there is a unique zigzag containing the sequence $e, e'$.
This property will be used in the next section. 

If $Z=\{e_{1},\dots,e_{n}\}$ is a zigzag, then the reversed sequence $Z^{-1}=\{e_{n},\dots,e_{1}\}$ also is a zigzag.
A zigzag cannot contain a sequence $e,e',\dots,e',e$ which implies that
$Z\ne Z^{-1}$ for any zigzag $Z$. 
In other words, a zigzag cannot be self-reversed (see \cite{PT2} for the proof for triangulations; in our case the proof is similar).

\begin{exmp}{\rm
Consider the cube $Q_3$ whose vertices are $1,\dots,8$, see Fig. 1. 
\begin{center}
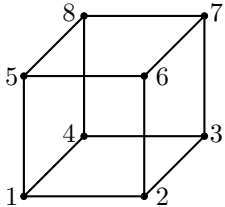

\begin{tikzpicture}[scale=0.8]


\draw[fill=black] (0,0) circle (1.5pt);
\draw[fill=black] (2,0) circle (1.5pt);
\draw[fill=black] (1,1) circle (1.5pt);
\draw[fill=black] (3,1) circle (1.5pt);

\draw[fill=black] (0,2) circle (1.5pt);
\draw[fill=black] (2,2) circle (1.5pt);
\draw[fill=black] (1,3) circle (1.5pt);
\draw[fill=black] (3,3) circle (1.5pt);

\draw[thick] (0,0)--(2,0)--(3,1)--(1,1)--cycle;
\draw[thick] (0,2)--(2,2)--(3,3)--(1,3)--cycle;

\draw[thick] (0,0)--(0,2);
\draw[thick] (2,0)--(2,2);
\draw[thick] (3,1)--(3,3);
\draw[thick] (1,1)--(1,3);

\node at (-0.2,0) {$1$};
\node at (2.3,0) {$2$};
\node at (0.75,1.05) {$4$};
\node at (3.2,1.05) {$3$};

\node at (-0.2,2) {$5$};
\node at (2.3,2) {$6$};
\node at (0.75,3.05) {$8$};
\node at (3.2,3.05) {$7$};

\end{tikzpicture}
\captionof{figure}{The cube $Q_3$}
\end{center}
It contains precisely $4$ zigzags up to reversing: 
$$12,23,37,78,85,51;\hspace{0.22cm}12,26,67,78,84,41;\hspace{0.22cm}14,43,37,76,65,51;\hspace{0.22cm}23,34,48,85,56,62.$$
Let $BP_n$ be  the $n$-gonal bipyramid, where $1,\dots,n$ are the consecutive vertices of the base 
and the remaining two vertices are $a,b$. 
\begin{center}
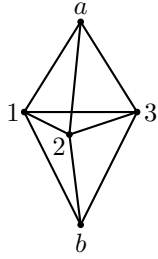

\begin{tikzpicture}[scale=0.3]


\coordinate (A) at (0.5,4);
\coordinate (B) at (0.5,-5);
\coordinate (1) at (0,-1);
\coordinate (2) at (-2,0);
\coordinate (3) at (3,0);

\draw[fill=black] (A) circle (3.5pt);
\draw[fill=black] (B) circle (3.5pt);

\draw[fill=black] (1) circle (3.5pt);
\draw[fill=black] (2) circle (3.5pt);
\draw[fill=black] (3) circle (3.5pt);

\draw[thick] (3)--(1)--(2);
\draw[thick] (2)--(3);

\draw[thick] (A)--(1);
\draw[thick] (A)--(2);
\draw[thick] (A)--(3);

\draw[thick] (B)--(1);
\draw[thick] (B)--(2);
\draw[thick] (B)--(3);

\node at (3.6,0) {$3$};
\node at (-0.45,-1.525) {$2$};
\node at (-2.5,0) {$1$};

\node at (0.5,4.6) {$a$};
\node at (0.5,-5.8) {$b$};

\node[color=white] at (4,0) {$.$};

\end{tikzpicture}
\captionof{figure}{The bipyramid $BP_3$}
\end{center}
If $n=3$ (see Fig. 2), then it contains a single zigzag (up to reversing): 
$$a1,12,2b,b3,31,1a,a2,23,3b,b1,12,2a,a3,31,1b,b2,23,3a.$$
The same holds for $BP_n$ if $n$ is odd. 
If $n$ is even, then $BP_n$ contains $2$ or $4$ zigzags up to reversing. 
}\end{exmp}

Every zigzag in $\Gamma$ induces in a natural way a zigzag in $\Gamma^*$ and vice versa. 
\begin{rem}{\rm
Zigzags can be defined in maps of non-simple graphs \cite{Lins1}. 
In this case, there are simple examples showing that a zigzag cannot be determined by any pair of its consecutive edges. 
}\end{rem}

\section{Main result}
Let $\Gamma$ be as in the previous section and let $F$ be a $k$-gonal face of $\Gamma$.  
Denote by $v_0,\dots,v_{k-1}$ the consecutive vertices of $F$ in a fixed orientation on the boundary of this face (it is possible that $v_i=v_j$ if $|i-j|\geq3$). 
Consider the set of all oriented edges of $F$
$$\Omega(F)=\{e_1,\dots,e_k,-e_k,\dots,-e_1\},$$
where $e_i=v_{i-1}v_i$ and $-e_i=v_iv_{i-1}$ are mutually reversed oriented edges of $F$ (the indices are taken modulo $k$); 
it is clear that $\Omega(F)$ consists of $2k$ mutually distinct elements. 
Let $D_F$ be the following permutation on $\Omega(F)$
$$D_F=(e_1,e_2,\dots,e_k)(-e_k,\dots,-e_2,-e_1).$$
In other words, $D_F$ transfers every oriented edge of $F$ to the next oriented edge in the corresponding orientation on the boundary. 

The {\it $z$-monodromy} of $F$ is the transformation $M_F$ on $\Omega(F)$ defined as follows. 
For any $e\in\Omega(F)$ we take $e_0\in\Omega(F)$ such that $D_F(e_0)=e$. 
There is a unique zigzag, where $e_0,e$ are consecutive edges. 
The first element of $\Omega(F)$ contained in this zigzag after $e_0,e$  is denoted by $M_F(e)$. 
\begin{rem}{\rm
The $z$-monodromy is defined when (SS) is satisfied. This concept cannot be carried out on the general case immediately. 
}\end{rem}
\begin{lemma}\label{lem1}
The following assertions are fulfilled: 
\begin{enumerate}
\item[(1)] If $M_{F}(e)=e'$ for some $e,e'\in \Omega(F)$, then $M_{F}(-e')=-e$. 
\item[(2)] $M_{F}$ is bijective.
\item[(3)] $M_{F}(e)\ne -e$ for every $e\in \Omega(F)$.
\end{enumerate}
\end{lemma}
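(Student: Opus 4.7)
The plan is to prove a local-structure claim describing how the zigzag $Z$ containing a consecutive pair $e_0, e$ with $D_F(e_0)=e$ sits around its subsequent returns to the boundary of $F$, and then to derive the three assertions from it, combined with the uniqueness of a zigzag through any consecutive pair of edges and the non-self-reversal of zigzags (both recalled in Section~2).

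For the local structure, starting from $e$ in $Z$ the next edge of $Z$ must leave the boundary of $F$: the definition of a zigzag forbids two consecutive pairs from sharing a face, and condition (SS) guarantees that any face other than $F$ meets $F$ in at most one edge, so no candidate next edge of $Z$ lies on the boundary of $F$. Hence $Z$ wanders outside $F$ and the first return is, by definition of $M_F$, the oriented edge $e'=M_F(e)$. By the same alternation argument, the edge in $Z$ immediately after $e'$ must share a face with $e'$ different from the previous one, hence must lie on the boundary of $F$; the non-intersecting clause in the zigzag definition together with (SS) pin it down as the unique edge of $F$ adjacent to $e'$ at the appropriate vertex, namely $D_F(e')$. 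So around the return, $Z$ looks like
\[
\dots,\, e_0,\, e,\, f_1,\dots,f_m,\, e',\, D_F(e'),\dots
\]
with $f_1,\dots,f_m\notin\Omega(F)$.

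For (1), I reverse $Z$: the zigzag $Z^{-1}$ has reversed order and every orientation flipped. From the description of $D_F$ as the two cycles $(e_1,\dots,e_k)$ and $(-e_k,\dots,-e_1)$, the implication $D_F(a)=b \Rightarrow D_F(-b)=-a$ holds, so in particular $D_F(-D_F(e'))=-e'$. Therefore the corresponding piece of $Z^{-1}$ is
\[
\dots,\, -D_F(e'),\, -e',\, -f_m,\dots,-f_1,\, -e,\, -e_0,\dots
\]
whose first pair is exactly the one associated with computing $M_F(-e')$, and whose first occurrence of an element of $\Omega(F)$ after $-e'$ is $-e$. Hence $M_F(-e')=-e$, proving (1). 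Assertion (2) is then formal: if $M_F(e_1)=M_F(e_2)=e'$, applying (1) gives $-e_1=M_F(-e')=-e_2$, so $M_F$ is injective on the finite set $\Omega(F)$, hence bijective. For (3), suppose $M_F(e)=-e$; the local-structure description then gives that $Z$ contains the consecutive pair $-e,\,-e_0$ right after having earlier contained $e_0,\,e$, so as an unoriented cyclic sequence $Z$ contains both $(e_0,e)$ and $(e,e_0)$ as consecutive pairs. This is exactly the forbidden subsequence $e_0,\,e,\dots,e,\,e_0$ ruled out in Section~2, or equivalently the unique zigzag through the unoriented pair $(e,e_0)$ would be simultaneously $Z$ and $Z^{-1}$, forcing the contradiction $Z=Z^{-1}$.

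The main obstacle is the orientation bookkeeping in the local-structure step: one must verify that the oriented edge which $M_F$ picks up at the first return is $e'$ rather than $-e'$, and that the reversal of $Z$ flips this choice consistently so that $D_F$ acts as claimed; once this is in hand, (1)--(3) follow as short formal consequences.
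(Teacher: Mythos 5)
Your proposal is correct and follows essentially the same route as the paper: reverse the zigzag through $e_0,e$ to get (1), deduce injectivity formally for (2), and derive $Z=Z^{-1}$ for a contradiction in (3). The only difference is that you spell out the local-structure step (that the first return $e'$ is immediately followed in $Z$ by $D_F(e')$, via (SS) and the non-intersection clause), which the paper asserts without detailed justification; this is a welcome elaboration rather than a deviation.
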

\begin{proof}
(1). 
Let $e\in \Omega(F)$. Consider $e_{0}\in\Omega(F)$ satisfying $D_{F}(e_{0})=e$.
If $Z$ is the zigzag containing the pair $e_{0},e$, 
then 
$$e'=M_{F}(e)\;\mbox{ and }\;e'_{0}=D_{F}M_{F}(e)$$ 
are the next two elements of $\Omega(F)$ in $Z$.
Observe that $D_{F}(-e'_{0})=-e'$.
The reversed zigzag $Z^{-1}$ contains the sequence $-e'_{0},-e'$ and $-e$ is the first element of $\Omega(F)$ contained in $Z^{-1}$ after this pair. 
This means that $M_{F}(-e')=-e$.

(2).
It is sufficient to show that $M_{F}$ is injective.
Suppose that $M_{F}(e)=M_{F}(e')=e''$.
By (1), we have $-e=M_{F}(-e'')=-e'$ which implies that $e=e'$.

(3). 
Let $e$ and $e_{0}$ be as in the proof of (1).
If $M_{F}(e)=-e$, then there is a zigzag $Z$ containing the sequences $e_{0},e$ and $-e,D_{F}(-e)$.
Since $D_{F}(-e)=-e_{0}$, $Z$ passes through both pairs $e_{0},e$  and $-e,-e_{0}$. 
This implies that $Z=Z^{-1}$ which is impossible. 
\end{proof}

The set $\Omega(F)$ is naturally identified with
$$[k]_{\pm}=[k]_{+}\cup[k]_{-}$$
where 
$$[k]_{+}=\{1,\dots,k\},\text{ and }[k]_{-}=\{-k,\dots,-1\}$$
($e_i$ and $-e_i$ correspond to $i$ and $-i$, respectively). 
Then, by Lemma \ref{lem1}, the $z$-monodromy $M_F$ is a permutation $\sigma$ of $[k]_{\pm}$ satisfying the following conditions:
\begin{enumerate}
\item[(M1)] if $\sigma(i)=j$, then $\sigma(-j)=-i$;
\item[(M2)] $\sigma(i)\neq-i$.
\end{enumerate}
\noindent Our main result is the following. 
\begin{theorem}\label{th1}
Let $S$ be a connected closed $2$-dimensional (not necessarily orientable) surface. 
Then every permutation $\sigma$ of $[k]_{\pm}$ satisfying {\rm (M1)} and {\rm (M2)} is realized as the $z$-monodromy of a $k$-gonal face in a connected finite graph $\Gamma$ embedded in $S$ and satisfying {\rm (SS)}. 
\end{theorem}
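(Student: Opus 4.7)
The plan is to first solve the planar case (where $S=S^2$) and then transfer to arbitrary $S$ by a standard connected-sum argument. Starting from a plane graph $\Gamma_0$ with a distinguished $k$-gonal face $F$ realising $\sigma$ and satisfying (SS), one removes a small open disk contained in a face of $\Gamma_0$ different from $F$ and identifies its boundary with the boundary of a disk removed from $S$, so that $S^2\#S\cong S$. Filling the attached region with a simple auxiliary triangulation restores (SS) and leaves $F$ and its entire neighbourhood untouched; in particular every zigzag of $\Gamma_0$ passing through $F$ extends identically, so $M_F=\sigma$ in the new graph. Hence it is enough to treat the planar case.

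For the planar case I would use the classical medial/chess-colouring correspondence. A plane map $\Gamma$ satisfying (SS) is equivalent to a $4$-regular plane graph $H=M(\Gamma)$ carrying a chess ($2$-)colouring of faces: white faces correspond bijectively to faces of $\Gamma$ and black faces to vertices of $\Gamma$. Under this correspondence, zigzags in $\Gamma$ are exactly the \emph{central circuits} of $H$, i.e.\ closed walks that go straight through every $4$-valent vertex. Building $\Gamma$ therefore reduces to exhibiting a chess-colourable $4$-regular plane graph with a distinguished white $k$-gonal face whose induced central-circuit pattern at the boundary realises $\sigma$.

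To realise $\sigma$ I would draw the distinguished white $k$-gon with boundary vertices $m_1,\dots,m_k$; at each $m_i$ two edges lie along the $k$-gon and two point outward, so the elements $\pm e_i\in\Omega(F)$ are encoded by (vertex, side). Partition $[k]_{\pm}$ into the orbits of $\sigma$. Condition (M1) together with Lemma~\ref{lem1}(1) forces the orbits to pair up under negation, and condition (M2) with Lemma~\ref{lem1}(3) rules out orbits $\{i,-i\}$, which is exactly what is needed to avoid the forbidden equality $Z=Z^{-1}$. For each pair of reversed orbits I would insert in the outer white region an \emph{arc gadget}: a chain of $4$-valent vertices whose central circuit enters and leaves the $k$-gon at precisely the boundary positions prescribed by the orbit, in the prescribed order. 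The gadget for the reversed orbit is placed symmetrically, so (M1) is automatic. After all gadgets are laid down pairwise disjointly in the outer region, the remaining white area is subdivided by auxiliary $4$-valent vertices so as to make the global graph $4$-regular while preserving the chess colouring.

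The main obstacle is the explicit design of the arc gadgets and the verification that the reconstructed $\Gamma$ satisfies (SS). Simultaneously one must ensure (i) each central circuit realises exactly one cycle of $\sigma$ with no unintended short return to $F$; (ii) distinct faces of $\Gamma$ share at most one edge, so that $\Gamma^*$ is simple; and (iii) no two edges of $\Gamma$ have the same pair of endpoints, so that $\Gamma$ is simple. Both (ii) and (iii) translate into local conditions on how gadgets meet at the black and white faces of $H$, and can be enforced by inserting enough buffer vertices between consecutive gadgets; keeping this bookkeeping uniform, and treating degenerate small-$k$ cases by hand, is the technical heart of the argument.
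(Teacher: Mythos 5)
Your planar construction follows essentially the same route as the paper: pass to a chess-colored $4$-regular plane graph, identify zigzags with central circuits, attach to the distinguished $k$-gon a system of arcs that forces each excursion of a central circuit away from $F$ to return at the position prescribed by $\sigma$, and then repair (SS) by local modifications. The paper realizes your ``arc gadgets'' very concretely (one curve per transition $e\mapsto\sigma(e)$, joining paired exit/entry points $l_i,r_j$, with arbitrary transversal crossings becoming the $4$-valent vertices, and an ``expansion'' move to kill loops and multiple edges without disturbing the central circuits), so the part you defer as ``the technical heart'' is exactly what the paper supplies; one small conceptual correction is that the closed curves of the resulting system need not biject with orbit pairs of $\sigma$ (in the paper's Example there are four orbits but three curves), and nothing requires ``each central circuit realises exactly one cycle of $\sigma$'' --- only that each first return to $\Omega(F)$ lands where $\sigma$ dictates.

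The genuine gap is in your reduction of the general case to the sphere. You remove a disk from the interior of a face $F''\neq F$, glue in a triangulated copy of $S$ minus a disk, and assert that every zigzag through $F$ ``extends identically.'' This does not follow: a zigzag is a global object, and between two consecutive visits to $F$ it may traverse two consecutive edges of $F''$; the surgery destroys $F''$ as a face (indeed $F''$ minus a disk is an annulus, not a $2$-cell, so further edges must be added), which reroutes any such zigzag and can change the first return to $F$, i.e.\ can change $M_F$. Choosing $F''$ ``far from $F$'' does not help. This is precisely the difficulty the paper's Section 5 is built to overcome: three circles arranged as Borromean rings are added to the medial graph so that the resulting map has a triangular face $T$ whose incident zigzags are exactly the three induced by those circles, none of which meets $F$; the connected sum is then performed along $T$ (identified with a triangular face of a graph embedded in $S$), and only then is the preservation of $M_F$ justified. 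Without an argument producing such a ``zigzag-isolated'' face, your transfer step fails.
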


In Section 4, we prove Theorem \ref{th1} for plane graphs (graphs embedded in a sphere). 
Graphs on surfaces different from a sphere will be considered in Section 5. 

\section{The plane case}
\subsection{Preliminary}
Let $G$ be a $4$-regular plane graph.
The dual graph $G^*$ is bipartite and there exists a {\it chess coloring} of faces of $G$ in two colors $b$ and $w$. 
For $c\in\{b,w\}$ we take a vertex inside every face of $G$ colored in $c$ and join two such vertices by an edge if the corresponding faces have a  common vertex at theirs boundaries. 
The obtained plane graph will be denoted by $\mathcal{R}_c(G)$. 
The graphs $\mathcal{R}_b(G)$ and $\mathcal{R}_{w}(G)$ are dual (see Fig. 3). 
\begin{center}
\begin{tikzpicture} 
\begin{scope}[scale=0.85]

\fill [opacity=0.2,gray] (0:3cm)--(30:2cm)--(60:3cm) -- cycle;
\fill [opacity=0.2,gray] (120:3cm)--(90:2cm)--(60:3cm) -- cycle;
\fill [opacity=0.2,gray] (120:3cm)--(150:2cm)--(180:3cm) -- cycle;
\fill [opacity=0.2,gray] (240:3cm)--(210:2cm)--(180:3cm) -- cycle;
\fill [opacity=0.2,gray] (240:3cm)--(270:2cm)--(300:3cm) -- cycle;
\fill [opacity=0.2,gray] (0:3cm)--(330:2cm)--(300:3cm) -- cycle;

\fill [opacity=0.2,gray] (30:2cm)--(0.85cm,0.5cm)--(1.15cm,0cm) -- cycle;
\fill [opacity=0.2,gray] (90:2cm)--(-0.87cm,0.5cm)--(0,0)--(0.85cm,0.5cm) -- cycle;
\fill [opacity=0.2,gray] (150:2cm)--(-0.87cm,0.5cm)--(-1.15cm,0cm) -- cycle;
\fill [opacity=0.2,gray] (210:2cm)--(-1.15cm,0cm)--(-0.86cm,-0.5cm) -- cycle;
\fill [opacity=0.2,gray] (270:2cm)--(-0.86cm,-0.5cm)--(0,0)-- (0.86cm,-0.5cm) -- cycle;
\fill [opacity=0.2,gray] (330:2cm)--(0.86cm,-0.5cm)--(1.15cm,0cm) -- cycle;

\draw[fill=black] (0:3cm) circle (2pt);
\draw[fill=black] (60:3cm) circle (2pt);
\draw[fill=black] (120:3cm) circle (2pt);
\draw[fill=black] (180:3cm) circle (2pt);
\draw[fill=black] (240:3cm) circle (2pt);
\draw[fill=black] (300:3cm) circle (2pt);
    \draw[thick] (0:3cm) \foreach \x in {60, 120,...,359} {
            -- (\x:3cm) 
        } -- cycle (60:3cm);

\draw[fill=black] (30:2cm) circle (2pt);
\draw[fill=black] (90:2cm) circle (2pt);
\draw[fill=black] (150:2cm) circle (2pt);
\draw[fill=black] (210:2cm) circle (2pt);
\draw[fill=black] (270:2cm) circle (2pt);
\draw[fill=black] (330:2cm) circle (2pt);

    \draw[thick] (0:3cm)--(30:2cm)--(60:3cm);
    \draw[thick] (120:3cm)--(90:2cm)--(60:3cm);
    \draw[thick] (120:3cm)--(150:2cm)--(180:3cm);
    \draw[thick] (240:3cm)--(210:2cm)--(180:3cm);
    \draw[thick] (240:3cm)--(270:2cm)--(300:3cm);
    \draw[thick] (0:3cm)--(330:2cm)--(300:3cm);

    \draw[thick] (30:2cm)--(270:2cm);
    \draw[thick] (30:2cm)--(210:2cm);

    \draw[thick] (90:2cm)--(330:2cm);
    \draw[thick] (90:2cm)--(210:2cm);

\draw[fill=black] (1.15cm,0cm) circle (2pt);
\node (z2) at (1.15cm,0cm) {};
\draw[fill=black] (0.85cm,0.5cm) circle (2pt);
\node (z1) at (0.85cm,0.5cm) {};

    \draw[thick] (150:2cm)--(270:2cm);
    \draw[thick] (150:2cm)--(330:2cm);

\draw[fill=black] (-1.15cm,0cm) circle (2pt);
\node (z3) at (-1.15cm,0cm) {};
\draw[fill=black] (-0.87cm,0.5cm) circle (2pt);
\node (z4) at (-0.87cm,0.5cm) {};
\draw[fill=black] (0,0) circle (2pt);
\node (z5) at (0,0) {};
\draw[fill=black] (-0.86cm,-0.5cm) circle (2pt);
\node (z6) at (-0.86cm,-0.5cm) {};
\draw[fill=black] (0.86cm,-0.5cm) circle (2pt);
\node (z7) at (0.86cm,-0.5cm) {};

\node (x1) at (0:3cm) {};
\node (x2) at (60:3cm) {};
\node (x3) at (120:3cm) {};
\node (x4) at (180:3cm) {};
\node (x5) at (240:3cm) {};
\node (x6) at (300:3cm) {};

\node (y1) at (30:2cm) {};
\node (y2) at (90:2cm) {};
\node (y3) at (150:2cm) {};
\node (y4) at (210:2cm) {};
\node (y5) at (270:2cm) {};
\node (y6) at (330:2cm) {};


\draw[fill=red, color=red] (barycentric cs:x1=1,x2=1,y1=1) circle (2pt);
\draw[fill=red, color=red] (barycentric cs:x2=1,x3=1,y2=1) circle (2pt);
\draw[fill=red, color=red] (barycentric cs:x3=1,x4=1,y3=1) circle (2pt);
\draw[fill=red, color=red] (barycentric cs:x4=1,x5=1,y4=1) circle (2pt);
\draw[fill=red, color=red] (barycentric cs:x5=1,x6=1,y5=1) circle (2pt);
\draw[fill=red, color=red] (barycentric cs:x6=1,x1=1,y6=1) circle (2pt);
    \draw[color=red] (barycentric cs:x1=1,x2=1,y1=1)--(barycentric cs:x2=1,x3=1,y2=1)--(barycentric cs:x3=1,x4=1,y3=1)--(barycentric cs:x4=1,x5=1,y4=1)--(barycentric cs:x5=1,x6=1,y5=1)--(barycentric cs:x6=1,x1=1,y6=1)--cycle;

\draw[fill=red, color=red] (barycentric cs:z1=1,z2=1,y1=1) circle (2pt);
\draw[fill=red, color=red] (barycentric cs:z1=1,z4=1,y2=1,z5=1) circle (2pt);
\draw[fill=red, color=red] (barycentric cs:z4=1,z3=1,y3=1) circle (2pt);
\draw[fill=red, color=red] (barycentric cs:z3=1,z6=1,y4=1) circle (2pt);
\draw[fill=red, color=red] (barycentric cs:z5=1,z6=1,y5=1,z7=1) circle (2pt);
\draw[fill=red, color=red] (barycentric cs:z2=1,z7=1,y6=1) circle (2pt); 
    \draw[color=red] (barycentric cs:z1=1,z2=1,y1=1)--(barycentric cs:z1=1,z4=1,y2=1,z5=1)--(barycentric cs:z4=1,z3=1,y3=1)--(barycentric cs:z3=1,z6=1,y4=1)--(barycentric cs:z5=1,z6=1,y5=1,z7=1)--(barycentric cs:z2=1,z7=1,y6=1)--cycle;
    \draw[color=red] (barycentric cs:z1=1,z4=1,y2=1,z5=1)--(barycentric cs:z5=1,z6=1,y5=1,z7=1);

    \draw[color=red] (barycentric cs:x1=1,x2=1,y1=1)--(barycentric cs:y1=1,z1=1,z2=1);
    \draw[color=red] (barycentric cs:x2=1,x3=1,y2=1)--(barycentric cs:y2=1,z1=1,z4=1,z5=1);
    \draw[color=red] (barycentric cs:x3=1,x4=1,y3=1)--(barycentric cs:y3=1,z3=1,z4=1);
    \draw[color=red] (barycentric cs:x4=1,x5=1,y4=1)--(barycentric cs:y4=1,z3=1,z6=1);
    \draw[color=red] (barycentric cs:x5=1,x6=1,y5=1)--(barycentric cs:y5=1,z6=1,z7=1,z5=1);
    \draw[color=red] (barycentric cs:x6=1,x1=1,y6=1)--(barycentric cs:y6=1,z2=1,z7=1);

\begin{scope}[xshift=7cm]

\fill [opacity=0.2,gray] (0:3cm)--(30:2cm)--(60:3cm) -- cycle;
\fill [opacity=0.2,gray] (120:3cm)--(90:2cm)--(60:3cm) -- cycle;
\fill [opacity=0.2,gray] (120:3cm)--(150:2cm)--(180:3cm) -- cycle;
\fill [opacity=0.2,gray] (240:3cm)--(210:2cm)--(180:3cm) -- cycle;
\fill [opacity=0.2,gray] (240:3cm)--(270:2cm)--(300:3cm) -- cycle;
\fill [opacity=0.2,gray] (0:3cm)--(330:2cm)--(300:3cm) -- cycle;

\fill [opacity=0.2,gray] (30:2cm)--(0.85cm,0.5cm)--(1.15cm,0cm) -- cycle;
\fill [opacity=0.2,gray] (90:2cm)--(-0.87cm,0.5cm)--(0,0)--(0.85cm,0.5cm) -- cycle;
\fill [opacity=0.2,gray] (150:2cm)--(-0.87cm,0.5cm)--(-1.15cm,0cm) -- cycle;
\fill [opacity=0.2,gray] (210:2cm)--(-1.15cm,0cm)--(-0.86cm,-0.5cm) -- cycle;
\fill [opacity=0.2,gray] (270:2cm)--(-0.86cm,-0.5cm)--(0,0)-- (0.86cm,-0.5cm) -- cycle;
\fill [opacity=0.2,gray] (330:2cm)--(0.86cm,-0.5cm)--(1.15cm,0cm) -- cycle;

\draw[fill=black] (0:3cm) circle (2pt);
\draw[fill=black] (60:3cm) circle (2pt);
\draw[fill=black] (120:3cm) circle (2pt);
\draw[fill=black] (180:3cm) circle (2pt);
\draw[fill=black] (240:3cm) circle (2pt);
\draw[fill=black] (300:3cm) circle (2pt);
    \draw[thick] (0:3cm) \foreach \x in {60, 120,...,359} {
            -- (\x:3cm) 
        } -- cycle (60:3cm);

\draw[fill=black] (30:2cm) circle (2pt);
\draw[fill=black] (90:2cm) circle (2pt);
\draw[fill=black] (150:2cm) circle (2pt);
\draw[fill=black] (210:2cm) circle (2pt);
\draw[fill=black] (270:2cm) circle (2pt);
\draw[fill=black] (330:2cm) circle (2pt);

    \draw[thick] (0:3cm)--(30:2cm)--(60:3cm);
    \draw[thick] (120:3cm)--(90:2cm)--(60:3cm);
    \draw[thick] (120:3cm)--(150:2cm)--(180:3cm);
    \draw[thick] (240:3cm)--(210:2cm)--(180:3cm);
    \draw[thick] (240:3cm)--(270:2cm)--(300:3cm);
    \draw[thick] (0:3cm)--(330:2cm)--(300:3cm);

    \draw[thick] (30:2cm)--(270:2cm);
    \draw[thick] (30:2cm)--(210:2cm);

    \draw[thick] (90:2cm)--(330:2cm);
    \draw[thick] (90:2cm)--(210:2cm);

\draw[fill=black] (1.15cm,0cm) circle (2pt);
\node (z2) at (1.15cm,0cm) {};
\draw[fill=black] (0.85cm,0.5cm) circle (2pt);
\node (z1) at (0.85cm,0.5cm) {};

    \draw[thick] (150:2cm)--(270:2cm);
    \draw[thick] (150:2cm)--(330:2cm);

\draw[fill=black] (-1.15cm,0cm) circle (2pt);
\node (z3) at (-1.15cm,0cm) {};
\draw[fill=black] (-0.87cm,0.5cm) circle (2pt);
\node (z4) at (-0.87cm,0.5cm) {};
\draw[fill=black] (0,0) circle (2pt);
\node (z5) at (0,0) {};
\draw[fill=black] (-0.86cm,-0.5cm) circle (2pt);
\node (z6) at (-0.86cm,-0.5cm) {};
\draw[fill=black] (0.86cm,-0.5cm) circle (2pt);
\node (z7) at (0.86cm,-0.5cm) {};

\node (x1) at (0:3cm) {};
\node (x2) at (60:3cm) {};
\node (x3) at (120:3cm) {};
\node (x4) at (180:3cm) {};
\node (x5) at (240:3cm) {};
\node (x6) at (300:3cm) {};

\node (y1) at (30:2cm) {};
\node (y2) at (90:2cm) {};
\node (y3) at (150:2cm) {};
\node (y4) at (210:2cm) {};
\node (y5) at (270:2cm) {};
\node (y6) at (330:2cm) {};


\draw[fill=blue, color=blue] (barycentric cs:x1=1,y1=1,z2=1,y6=1) circle (2pt);
\draw[fill=blue, color=blue] (barycentric cs:x2=1,y2=1,z1=1,y1=1) circle (2pt);
\draw[fill=blue, color=blue] (barycentric cs:x3=1,y3=1,z4=1,y2=1) circle (2pt);
\draw[fill=blue, color=blue] (barycentric cs:x4=1,y4=1,z3=1,y3=1) circle (2pt);
\draw[fill=blue, color=blue] (barycentric cs:x5=1,y5=1,z6=1,y4=1) circle (2pt);
\draw[fill=blue, color=blue] (barycentric cs:x6=1,y5=1,y6=1,z7=1) circle (2pt);

    \draw[color=blue] (barycentric cs:x1=1,y1=1,z2=1,y6=1)--(barycentric cs:x2=1,y2=1,z1=1,y1=1)--(barycentric cs:x3=1,y3=1,z4=1,y2=1)-- (barycentric cs:x4=1,y4=1,z3=1,y3=1) --(barycentric cs:x5=1,y5=1,z6=1,y4=1)--(barycentric cs:x6=1,y5=1,y6=1,z7=1)--cycle;

\draw[fill=blue, color=blue] (barycentric cs:z3=1,z4=1,z5=1,z6=1) circle (2pt);
\draw[fill=blue, color=blue] (barycentric cs:z1=1,z2=1,z7=1,z5=1) circle (2pt);
    \draw[fill=blue, color=blue] (barycentric cs:x1=1,y1=1,z2=1,y6=1)--(barycentric cs:z1=1,z2=1,z7=1,z5=1);
    \draw[fill=blue, color=blue] (barycentric cs:x2=1,y2=1,z1=1,y1=1)--(barycentric cs:z1=1,z2=1,z7=1,z5=1);
    \draw[fill=blue, color=blue] (barycentric cs:x6=1,y5=1,y6=1,z7=1)--(barycentric cs:z1=1,z2=1,z7=1,z5=1);
    \draw[fill=blue, color=blue] (barycentric cs:z3=1,z4=1,z5=1,z6=1)--(barycentric cs:z1=1,z2=1,z7=1,z5=1);

    \draw[fill=blue, color=blue] (barycentric cs:z3=1,z4=1,z5=1,z6=1)--(barycentric cs:x3=1,y3=1,z4=1,y2=1);
    \draw[fill=blue, color=blue] (barycentric cs:z3=1,z4=1,z5=1,z6=1)--(barycentric cs:x4=1,y4=1,z3=1,y3=1);
    \draw[fill=blue, color=blue] (barycentric cs:z3=1,z4=1,z5=1,z6=1)--(barycentric cs:x5=1,y5=1,z6=1,y4=1);

\draw[fill=blue, color=blue] (4cm,0) circle (2pt);
    \draw[fill=blue, color=blue] (4cm,0)--(barycentric cs:x1=1,y1=1,z2=1,y6=1);
    \draw[fill=blue, color=blue] (4cm,0)--(barycentric cs:x2=1,y2=1,z1=1,y1=1);
    \draw [blue] plot [smooth] coordinates {(4cm,0) (3cm,2cm) (2cm,3cm) (1cm,3cm) (barycentric cs:x3=1,y3=1,z4=1,y2=1)};
    \draw [blue] plot [smooth] coordinates {(4cm,0) (3.5cm,-2.25cm) (2.75cm,-3.25cm) (2cm,-3.66cm) (1cm,-3.72cm) (-0.9cm,-3cm) (barycentric cs:x4=1,y4=1,z3=1,y3=1)};
    \draw [blue] plot [smooth] coordinates {(4cm,0) (3cm,-2cm) (2cm,-3cm) (1cm,-3cm) (barycentric cs:x5=1,y5=1,z6=1,y4=1)};
    \draw[fill=blue, color=blue] (4cm,0)--(barycentric cs:x6=1,y5=1,y6=1,z7=1);
\end{scope}
\end{scope}
\end{tikzpicture}

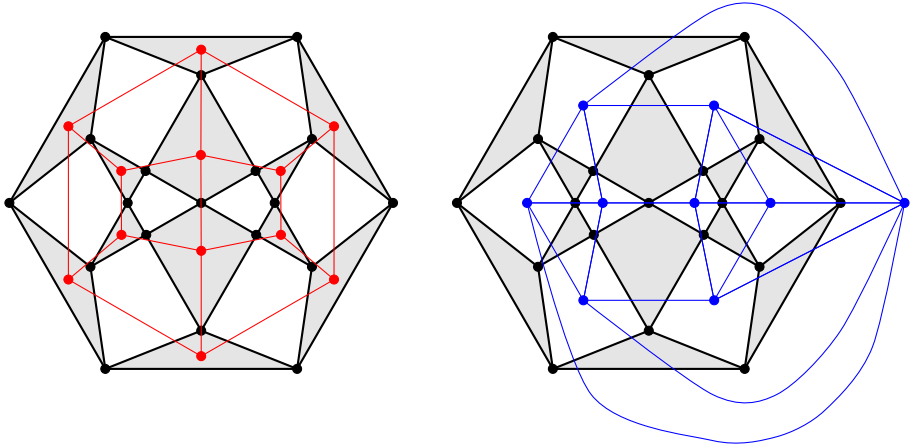
\captionof{figure}{The chess coloring and the related graphs}
\end{center}

Consider a plane graph $\Gamma$. 
The {\it medial graph} of $\Gamma$ is the graph $\mathcal{M}(\Gamma)$ whose vertex set is the edge set of $\Gamma$ and two vertices of $\mathcal{M}(\Gamma)$ are joined by an edge if they have a common vertex and belong to the same face in $\Gamma$. 
The graph $\mathcal{M}(\Gamma)$ is also plane. 
This graph is $4$-regular and its face set is the union of the vertex set and the face set of $\Gamma$. 
Thus, $\mathcal{M}(\Gamma)$ is chess colored. 
Let $b$ be the color used to coloring the faces of $\mathcal{M}(\Gamma)$ corresponding to the vertices of $\Gamma$. 
The remaining faces of $\mathcal{M}(\Gamma)$ (corresponding to the faces of $\Gamma$) are colored in $w$. 
Then 
$$\mathcal{R}_b(\mathcal{M}(\Gamma))=\Gamma\hspace{0.5cm}\text{ and }\hspace{0.5cm}\mathcal{R}_{w}(\mathcal{M}(\Gamma))=\Gamma^*.$$ 
For example, the graph marked in black in Fig. 3 is the medial graph of the graphs marked in red and blue.

A {\it central circuit} is a circuit in the medial graph which is obtained by starting with an edge and continuing at each vertex by the edge opposite to the entering one \cite[p.5]{DDS-book}.
We will consider central circuits as cyclic sequences of vertices and distinguish each central circuit from the reversed. 
If $\Gamma$ and $\Gamma^*$ both are simple, then there is a one-to-one correspondence between zigzags in $\Gamma$ and central circuits in $\mathcal{M}(\Gamma)$: 
a sequence formed by edges of $\Gamma$ is a zigzag if and only if this sequence is a central circuit in $\mathcal{M}(\Gamma)$. 
In Fig. 4. the part of the zigzag is marked by the bold red line and the corresponding part of the central circuit is marked by the bold black line.

\begin{center}
\begin{tikzpicture} 
\begin{scope}[scale=0.8]

\fill [opacity=0.2,gray] (0:3cm)--(30:2cm)--(60:3cm) -- cycle;
\fill [opacity=0.2,gray] (120:3cm)--(90:2cm)--(60:3cm) -- cycle;
\fill [opacity=0.2,gray] (120:3cm)--(150:2cm)--(180:3cm) -- cycle;
\fill [opacity=0.2,gray] (240:3cm)--(210:2cm)--(180:3cm) -- cycle;
\fill [opacity=0.2,gray] (240:3cm)--(270:2cm)--(300:3cm) -- cycle;
\fill [opacity=0.2,gray] (0:3cm)--(330:2cm)--(300:3cm) -- cycle;

\fill [opacity=0.2,gray] (30:2cm)--(0.85cm,0.5cm)--(1.15cm,0cm) -- cycle;
\fill [opacity=0.2,gray] (90:2cm)--(-0.87cm,0.5cm)--(0,0)--(0.85cm,0.5cm) -- cycle;
\fill [opacity=0.2,gray] (150:2cm)--(-0.87cm,0.5cm)--(-1.15cm,0cm) -- cycle;
\fill [opacity=0.2,gray] (210:2cm)--(-1.15cm,0cm)--(-0.86cm,-0.5cm) -- cycle;
\fill [opacity=0.2,gray] (270:2cm)--(-0.86cm,-0.5cm)--(0,0)-- (0.86cm,-0.5cm) -- cycle;
\fill [opacity=0.2,gray] (330:2cm)--(0.86cm,-0.5cm)--(1.15cm,0cm) -- cycle;

\draw[fill=black] (0:3cm) circle (2pt);
\draw[fill=black] (60:3cm) circle (2pt);
\draw[fill=black] (120:3cm) circle (2pt);
\draw[fill=black] (180:3cm) circle (2pt);
\draw[fill=black] (240:3cm) circle (2pt);
\draw[fill=black] (300:3cm) circle (2pt);
    \draw[thick] (0:3cm) \foreach \x in {60, 120,...,359} {
            -- (\x:3cm) 
        } -- cycle (60:3cm);

\draw[fill=black] (30:2cm) circle (2pt);
\draw[fill=black] (90:2cm) circle (2pt);
\draw[fill=black] (150:2cm) circle (2pt);
\draw[fill=black] (210:2cm) circle (2pt);
\draw[fill=black] (270:2cm) circle (2pt);
\draw[fill=black] (330:2cm) circle (2pt);

    \draw (0:3cm)--(30:2cm)--(60:3cm);
    \draw (120:3cm)--(90:2cm)--(60:3cm);
    \draw (120:3cm)--(150:2cm)--(180:3cm);
    \draw (240:3cm)--(210:2cm)--(180:3cm);
    \draw (240:3cm)--(270:2cm)--(300:3cm);
    \draw (0:3cm)--(330:2cm)--(300:3cm);

    \draw (30:2cm)--(270:2cm);
    \draw (30:2cm)--(210:2cm);

    \draw (90:2cm)--(330:2cm);
    \draw (90:2cm)--(210:2cm);

\draw[fill=black] (1.15cm,0cm) circle (2pt);
\node (z2) at (1.15cm,0cm) {};
\draw[fill=black] (0.85cm,0.5cm) circle (2pt);
\node (z1) at (0.85cm,0.5cm) {};

    \draw[thick] (150:2cm)--(270:2cm);
    \draw[thick] (150:2cm)--(330:2cm);

\draw[fill=black] (-1.15cm,0cm) circle (2pt);
\node (z3) at (-1.15cm,0cm) {};
\draw[fill=black] (-0.87cm,0.5cm) circle (2pt);
\node (z4) at (-0.87cm,0.5cm) {};
\draw[fill=black] (0,0) circle (2pt);
\node (z5) at (0,0) {};
\draw[fill=black] (-0.86cm,-0.5cm) circle (2pt);
\node (z6) at (-0.86cm,-0.5cm) {};
\draw[fill=black] (0.86cm,-0.5cm) circle (2pt);
\node (z7) at (0.86cm,-0.5cm) {};

\node (x1) at (0:3cm) {};
\node (x2) at (60:3cm) {};
\node (x3) at (120:3cm) {};
\node (x4) at (180:3cm) {};
\node (x5) at (240:3cm) {};
\node (x6) at (300:3cm) {};

\node (y1) at (30:2cm) {};
\node (y2) at (90:2cm) {};
\node (y3) at (150:2cm) {};
\node (y4) at (210:2cm) {};
\node (y5) at (270:2cm) {};
\node (y6) at (330:2cm) {};
    \draw[thick, line width=1.5pt] (120:3cm)--(150:2cm);
    \draw[thick, line width=1.5pt] (150:2cm)--(-1.15cm,0cm);
    \draw[thick, line width=1.5pt] (-1.15cm,0cm)--(-0.86cm,-0.5cm); 
    \draw[thick, line width=1.5pt] (-0.86cm,-0.5cm)--(270:2cm);
    \draw[thick, line width=1.5pt] (270:2cm)--(300:3cm);
    \draw[thick, line width=1.5pt] (300:3cm)--(0:3cm);
    \draw[thick, line width=1.5pt] (0:3cm)--(30:2cm);
    \draw[thick, line width=1.5pt] (30:2cm)--(0.85cm,0.5cm);


\draw[fill=red, color=red] (barycentric cs:x1=1,x2=1,y1=1) circle (2pt);
\draw[fill=red, color=red] (barycentric cs:x2=1,x3=1,y2=1) circle (2pt);
\draw[fill=red, color=red] (barycentric cs:x3=1,x4=1,y3=1) circle (2pt);
\draw[fill=red, color=red] (barycentric cs:x4=1,x5=1,y4=1) circle (2pt);
\draw[fill=red, color=red] (barycentric cs:x5=1,x6=1,y5=1) circle (2pt);
\draw[fill=red, color=red] (barycentric cs:x6=1,x1=1,y6=1) circle (2pt);
    \draw[color=red] (barycentric cs:x1=1,x2=1,y1=1)--(barycentric cs:x2=1,x3=1,y2=1)--(barycentric cs:x3=1,x4=1,y3=1)--(barycentric cs:x4=1,x5=1,y4=1)--(barycentric cs:x5=1,x6=1,y5=1)--(barycentric cs:x6=1,x1=1,y6=1)--cycle;

\draw[fill=red, color=red] (barycentric cs:z1=1,z2=1,y1=1) circle (2pt);
\draw[fill=red, color=red] (barycentric cs:z1=1,z4=1,y2=1,z5=1) circle (2pt);
\draw[fill=red, color=red] (barycentric cs:z4=1,z3=1,y3=1) circle (2pt);
\draw[fill=red, color=red] (barycentric cs:z3=1,z6=1,y4=1) circle (2pt);
\draw[fill=red, color=red] (barycentric cs:z5=1,z6=1,y5=1,z7=1) circle (2pt);
\draw[fill=red, color=red] (barycentric cs:z2=1,z7=1,y6=1) circle (2pt); 
    \draw[color=red] (barycentric cs:z1=1,z2=1,y1=1)--(barycentric cs:z1=1,z4=1,y2=1,z5=1)--(barycentric cs:z4=1,z3=1,y3=1)--(barycentric cs:z3=1,z6=1,y4=1)--(barycentric cs:z5=1,z6=1,y5=1,z7=1)--(barycentric cs:z2=1,z7=1,y6=1)--cycle;
    \draw[color=red] (barycentric cs:z1=1,z4=1,y2=1,z5=1)--(barycentric cs:z5=1,z6=1,y5=1,z7=1);

    \draw[color=red] (barycentric cs:x1=1,x2=1,y1=1)--(barycentric cs:y1=1,z1=1,z2=1);
    \draw[color=red] (barycentric cs:x2=1,x3=1,y2=1)--(barycentric cs:y2=1,z1=1,z4=1,z5=1);
    \draw[color=red] (barycentric cs:x3=1,x4=1,y3=1)--(barycentric cs:y3=1,z3=1,z4=1);
    \draw[color=red] (barycentric cs:x4=1,x5=1,y4=1)--(barycentric cs:y4=1,z3=1,z6=1);
    \draw[color=red] (barycentric cs:x5=1,x6=1,y5=1)--(barycentric cs:y5=1,z6=1,z7=1,z5=1);
    \draw[color=red] (barycentric cs:x6=1,x1=1,y6=1)--(barycentric cs:y6=1,z2=1,z7=1);
\draw[color=red, line width=1.5pt] (barycentric cs:x2=1,x3=1,y2=1)--(barycentric cs:x3=1,x4=1,y3=1)--(barycentric cs:z4=1,z3=1,y3=1)--(barycentric cs:z3=1,z6=1,y4=1)--(barycentric cs:z5=1,z6=1,y5=1,z7=1)--(barycentric cs:x5=1,x6=1,y5=1)--(barycentric cs:x6=1,x1=1,y6=1)--(barycentric cs:x1=1,x2=1,y1=1)--(barycentric cs:z1=1,z2=1,y1=1)--(barycentric cs:z1=1,z4=1,y2=1,z5=1);

\end{scope}
\end{tikzpicture}

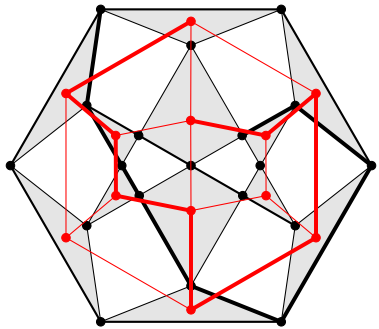
\captionof{figure}{A zigzag and the corresponding central circuit}
\end{center}

\subsection{Main construction}
Let $\sigma$ be a permutation on $[k]_{\pm}$ satisfying (M1) and (M2). 
We construct a $4$-regular plane graph $G$ which is the medial graph of a plane graph, where $\sigma$ realizes as the $z$-monodromy of a face. 

Consider a circle $C$ embedded in the plane. 
We take mutually distinct points $p_1,\dots,p_k$ from $C$ such that these points occur along $C$ in the clockwise order. 
Next, denote by $C'$ a circle inside the part of the plane bounded by $C$ and take mutually distinct points $a_{12}, a_{23},\dots, a_{(k-1)k}, a_{k1}$ occurring on $C'$ in the clockwise order. 
Similarly, let $C''$ be a circle inside the part of the plane bounded by $C'$ and let $r_1,l_k,r_2,l_1,\dots,r_k,l_{k-1}$ be mutually distinct points that occur along $C''$ in the clockwise order. 
For every $a_{ij}$ we take two 
segments that intersect precisely in $a_{ij}$ and join $p_i$ with $l_i$ and $p_j$ with $r_j$, respectively. 
Note that all such 
segments intersect each of $C, C', C''$ in precisely one point and the interiors of any two of these 
segments are disjoint if they contain distinct points $a_{ij}$. 
Denote by $S_i$ the union of the segment joining $r_i$ with $p_i$, the arc of $C$ between $p_i$ and $p_{i+1}$ and the segment joining $p_{i+1}$ with $l_{i+1}$ (indices are taken modulo $k$), see Fig. 5 for the case $k=6$. 

\begin{center}
\begin{tikzpicture}[scale=0.9]
\draw (0,0) circle (3cm);

    \coordinate (P1) at (60:3cm);
    \coordinate (P2) at (0:3cm);
    \coordinate (P3) at (300:3cm);
    \coordinate (P4) at (240:3cm);
    \coordinate (P5) at (180:3cm);
    \coordinate (P6) at (120:3cm);

    \coordinate (A12) at (30:2.25cm);
    \coordinate (A23) at (330:2.25cm);
    \coordinate (A34) at (270:2.25cm);
    \coordinate (A45) at (210:2.25cm);
    \coordinate (A56) at (150:2.25cm);
    \coordinate (A61) at (90:2.25cm);

\draw[fill=black] (P1) circle (1pt);
\draw[fill=black] (P2) circle (1pt);
\draw[fill=black] (P3) circle (1pt);
\draw[fill=black] (P4) circle (1pt);
\draw[fill=black] (P5) circle (1pt);
\draw[fill=black] (P6) circle (1pt);

\draw[fill=black] (A12) circle (1pt);
\draw[fill=black] (A23) circle (1pt);
\draw[fill=black] (A34) circle (1pt);
\draw[fill=black] (A45) circle (1pt);
\draw[fill=black] (A56) circle (1pt);
\draw[fill=black] (A61) circle (1pt);

    \coordinate (L1) at ($(P1)!1.5!(A12)$);
    \coordinate (L2) at ($(P2)!1.5!(A23)$);
    \coordinate (L3) at ($(P3)!1.5!(A34)$);
    \coordinate (L4) at ($(P4)!1.5!(A45)$);
    \coordinate (L5) at ($(P5)!1.5!(A56)$);
    \coordinate (L6) at ($(P6)!1.5!(A61)$);

    \coordinate (R1) at ($(P1)!1.5!(A61)$);
    \coordinate (R2) at ($(P2)!1.5!(A12)$);
    \coordinate (R3) at ($(P3)!1.5!(A23)$);
    \coordinate (R4) at ($(P4)!1.5!(A34)$);
    \coordinate (R5) at ($(P5)!1.5!(A45)$);
    \coordinate (R6) at ($(P6)!1.5!(A56)$);

    \draw (P1) -- (L1);
    \draw (P1) -- (R1);
    \draw (P2) -- (L2);
    \draw (P2) -- (R2);
    \draw (P3) -- (L3);
    \draw (P3) -- (R3);
    \draw (P4) -- (L4);
    \draw (P4) -- (R4);
    \draw (P5) -- (L5);
    \draw (P5) -- (R5);
    \draw (P6) -- (L6);
    \draw (P6) -- (R6);

\draw[fill=black] (L1) circle (1pt);
\draw[fill=black] (L2) circle (1pt);
\draw[fill=black] (L3) circle (1pt);
\draw[fill=black] (L4) circle (1pt);
\draw[fill=black] (L5) circle (1pt);
\draw[fill=black] (L6) circle (1pt);

\draw[fill=black] (R1) circle (1pt);
\draw[fill=black] (R2) circle (1pt);
\draw[fill=black] (R3) circle (1pt);
\draw[fill=black] (R4) circle (1pt);
\draw[fill=black] (R5) circle (1pt);
\draw[fill=black] (R6) circle (1pt);

\node at (60:3.3cm) {$p_1$};
\node at (0:3.3cm) {$p_2$};
\node at (300:3.3cm) {$p_3$};
\node at (240:3.3cm) {$p_4$};
\node at (180:3.3cm) {$p_5$};
\node at (120:3.3cm) {$p_6$};

\node at (28:2.6cm) {$a_{12}$};
\node at (90:2.5cm) {$a_{61}$};
\node at (152:2.62cm) {$a_{56}$};
\node at (208:2.6cm) {$a_{45}$};
\node at (270:2.54cm) {$a_{34}$};
\node at (332:2.62cm) {$a_{23}$};

\node at ($(0,0)!0.88!(L1)$) {$l_1$};
\node at ($(0,0)!0.88!(L2)$) {$l_2$};
\node at ($(0,0)!0.88!(L3)$) {$l_3$};
\node at ($(0,0)!0.88!(L4)$) {$l_4$};
\node at ($(0,0)!0.88!(L5)$) {$l_5$};
\node at ($(0,0)!0.88!(L6)$) {$l_6$};

\node at ($(0,0)!0.88!(R1)$) {$r_1$};
\node at ($(0,0)!0.88!(R2)$) {$r_2$};
\node at ($(0,0)!0.88!(R3)$) {$r_3$};
\node at ($(0,0)!0.88!(R4)$) {$r_4$};
\node at ($(0,0)!0.88!(R5)$) {$r_5$};
\node at ($(0,0)!0.88!(R6)$) {$r_6$};

    \coordinate (S1) at (30:3.35cm);
    \coordinate (S2) at (330:3.35cm);
    \coordinate (S3) at (270:3.35cm);
    \coordinate (S4) at (210:3.35cm);
    \coordinate (S5) at (150:3.35cm);
    \coordinate (S6) at (90:3.35cm);

\node at (S1) {$S_1$};
\node at (S2) {$S_2$};
\node at (S3) {$S_3$};
\node at (S4) {$S_4$};
\node at (S5) {$S_5$};
\node at (S6) {$S_6$};
\end{tikzpicture}

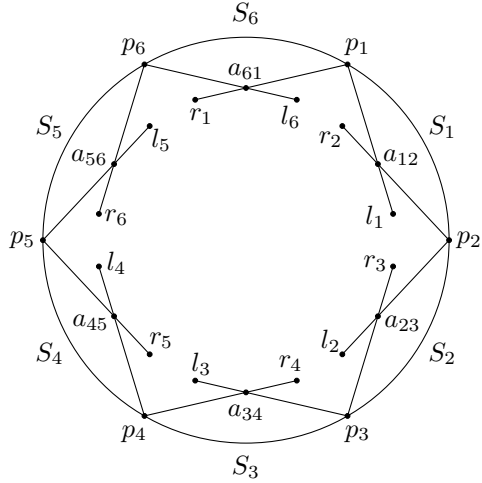
\captionof{figure}{The beginning of construction for $k=6$}
\end{center}

We will work with the relation $\sim$ on the set 
$$\mathcal{O}=\bigcup\limits_{i=1}^{k}\{l_i,r_i\}$$
such that for any $i,j\in[k]_\pm$ satisfying $\sigma(i)=j$ one of the following possibilities is realized: 
\begin{enumerate}
\item[(1)] $l_i\sim r_j$ if $i,j\in[k]_+$, 
\item[(2)] $r_{-i}\sim l_{-j}$ if $i,j\in[k]_-$, 
\item[(3)] $l_i\sim l_{-j}$ if $i\in[k]_+$ and $j\in[k]_-$, 
\item[(4)] $r_{-i}\sim r_j$ if $i\in[k]_-$ and $j\in[k]_+$. 
\end{enumerate}
The relation $\sim$ is irreflexive and symmetric. 
Indeed, if $l_i\sim l_i$ (the case (3)) or $r_i\sim r_i$ (the case (4)), then we get $\sigma(i)=-i$ which contradicts (M2). 
Thus, $\sim$ is irreflexive. 
Now, we show that if $l_i\sim l_j$, then $l_j\sim l_i$ (the remaining three cases are similar). 
If $l_i\sim l_j$ with $i,j\in[k]_+$ (the case (3)), then $\sigma(i)=-j$ and, by (M1), $\sigma(j)=-i$ and $j\in[k]_+, -i\in[k]_-$; i.e. $l_j\sim l_i$.
Note that for each $x\in\mathcal{O}$ there is a unique $x'\in\mathcal{O}$ such that $x\sim x'$. 

If two points from $\mathcal{O}$ are in the relation $\sim$, then we join them by a curve homeomorphic to the segment $[0,1]$ inside the part of the plane bounded by $C''$. 
The following conditions must be satisfied:

$\bullet$ the curves have no more than finitely many intersections and self-intersections, 

$\bullet$ for every such intersection point either there are precisely two distinct curves passing once through this point or there is a single curve passing twice through it, 

$\bullet$ all intersections are transversal.


\noindent Denote the family of the curves described above by $\mathcal{L}$. 

Note that every $x\in\mathcal{O}$ is a common point for a unique $S_i$ and a unique $L\in\mathcal{L}$. 
Thus, for every curve from $\{S_1,\dots,S_k\}$ we identify each of its endpoints with the corresponding endpoint of a curve from $\mathcal{L}$ 
and obtain a collection of closed curves which will be denoted by $\mathcal{C}$; curves from this collection need not be disjoint. 
Let $V$ be the set of all intersection and self-intersection points of curves from $\mathcal{C}$. 
In particular, all $p_i$ and all $a_{ij}$ belong to $V$. 

\begin{exmp}\label{ex3}{\rm
Let $k=6$ and 
$$\sigma=(1,-6,-4,2)(3,-5)(5,-3)(-2,4,6,-1).$$
The relation $\sim$ on the set $\mathcal{O}=\{l_1.\dots,l_6,r_1,\dots,r_6\}$ is as follows
$$l_1\sim l_6,\hspace{0.2cm}r_6\sim l_4,\hspace{0.2cm}r_4\sim r_2,\hspace{0.2cm}l_2\sim r_1,\hspace{0.2cm}l_3\sim l_5,\hspace{0.2cm}r_5\sim r_3.$$
One of suitable connections between points from $\mathcal{O}$ is presented in Fig. 6. 
\begin{center}
\begin{tikzpicture}[scale=0.9]

    \coordinate (P1) at (60:3cm);
    \coordinate (P2) at (0:3cm);
    \coordinate (P3) at (300:3cm);
    \coordinate (P4) at (240:3cm);
    \coordinate (P5) at (180:3cm);
    \coordinate (P6) at (120:3cm);

    \coordinate (A12) at (30:2.25cm);
    \coordinate (A23) at (330:2.25cm);
    \coordinate (A34) at (270:2.25cm);
    \coordinate (A45) at (210:2.25cm);
    \coordinate (A56) at (150:2.25cm);
    \coordinate (A61) at (90:2.25cm);

    \coordinate (L1) at ($(P1)!1.5!(A12)$);
    \coordinate (L2) at ($(P2)!1.5!(A23)$);
    \coordinate (L3) at ($(P3)!1.5!(A34)$);
    \coordinate (L4) at ($(P4)!1.5!(A45)$);
    \coordinate (L5) at ($(P5)!1.5!(A56)$);
    \coordinate (L6) at ($(P6)!1.5!(A61)$);

    \coordinate (R1) at ($(P1)!1.5!(A61)$);
    \coordinate (R2) at ($(P2)!1.5!(A12)$);
    \coordinate (R3) at ($(P3)!1.5!(A23)$);
    \coordinate (R4) at ($(P4)!1.5!(A34)$);
    \coordinate (R5) at ($(P5)!1.5!(A45)$);
    \coordinate (R6) at ($(P6)!1.5!(A56)$);

    \draw[color=blue] (P1) -- (L1);
    \draw[color=green] (P1) -- (R1);
\draw [green,domain=0:60] plot ({3cm*cos(\x)}, {3cm*sin(\x)});
    \draw[color=green] (P2) -- (L2);
    \draw[color=red] (P2) -- (R2);
\draw [red,domain=300:360] plot ({3cm*cos(\x)}, {3cm*sin(\x)});
    \draw[color=red] (P3) -- (L3);
    \draw[color=blue] (P3) -- (R3);
\draw [blue,domain=240:300] plot ({3cm*cos(\x)}, {3cm*sin(\x)});
    \draw[color=blue] (P4) -- (L4);
    \draw[color=red] (P4) -- (R4);
    \draw[color=red] (P5) -- (L5);
\draw [red,domain=180:240] plot ({3cm*cos(\x)}, {3cm*sin(\x)});
    \draw[color=blue] (P5) -- (R5);
    \draw[color=blue] (P6) -- (L6);
\draw [blue,domain=120:180] plot ({3cm*cos(\x)}, {3cm*sin(\x)});
    \draw[color=blue] (P6) -- (R6);
\draw [blue,domain=60:120] plot ({3cm*cos(\x)}, {3cm*sin(\x)});

\node at (60:3.3cm) {$p_1$};
\node at (0:3.3cm) {$p_2$};
\node at (300:3.3cm) {$p_3$};
\node at (240:3.3cm) {$p_4$};
\node at (180:3.3cm) {$p_5$};
\node at (120:3.3cm) {$p_6$};

\node at (28:2.6cm) {$a_{12}$};
\node at (90:2.5cm) {$a_{61}$};
\node at (152:2.62cm) {$a_{56}$};
\node at (208:2.6cm) {$a_{45}$};
\node at (270:2.54cm) {$a_{34}$};
\node at (332:2.62cm) {$a_{23}$};

\node at ($($(0,0)!0.88!(L1)$)+(0.06cm,0.2cm)$) {$l_1$};
\node at ($($(0,0)!0.88!(L2)$)+(0.15cm,0.1cm)$) {$l_2$};
\node at ($(0,0)!0.88!(L3)$) {$l_3$};
\node at ($(0,0)!0.88!(L4)$) {$l_4$};
\node at ($(0,0)!0.88!(L5)$) {$l_5$};
\node at ($(0,0)!0.88!(L6)$) {$l_6$};

\node at ($(0,0)!0.88!(R1)$) {$r_1$};
\node at ($($(0,0)!0.88!(R2)$)+(0.18cm,-0.08cm)$) {$r_2$};
\node at ($($(0,0)!0.88!(R3)$)+(0,-0.1cm)$) {$r_3$};
\node at ($(0,0)!0.88!(R4)$) {$r_4$};
\node at ($(0,0)!0.88!(R5)$) {$r_5$};
\node at ($(0,0)!0.88!(R6)$) {$r_6$};

    \draw[color=blue] (L4) -- (R6);
\draw [red] plot [smooth] coordinates {(R2) ($(R2)+(-0.15cm,-0.1cm)$) ($(0,0)!0.6!($0.5*(R2)+0.5*(R4)$)$) ($(R4)+(0.15cm,0.3cm)$) (R4)};
\draw [green] plot [smooth] coordinates {(L2) ($(L2)+(-0.36cm,0.15cm)$) ($($(0,0)!0.1!($0.5*(L2)+0.5*(R1)$)$)+(-0.1cm,-0.1cm)$) ($(R1)+(-0.15cm,-0.3cm)$) (R1)};
\draw [blue] plot [smooth] coordinates {(L1) ($(L1)+(-0.1cm,-0.1cm)$) ($(0,0)!0.6!($0.5*(L1)+0.5*(L6)$)$) ($(L6)+(0.15cm,-0.3cm)$) (L6)};
\draw [red] plot [smooth] coordinates {(L5) ($(L5)+(0.3cm,-0.1cm)$) ($(0,0)!0.6!($0.5*(L5)+0.5*(L3)$)$) ($(L3)+(-0.2cm,0.3cm)$) (L3)};
\draw [blue] plot [smooth] coordinates {(R5) ($(R5)+(0.4cm,0cm)$) ($(0,0)!0.6!($0.5*(R5)+0.5*(R3)$)$) ($(R3)+(-0.2cm,0.2cm)$) (R3)};
\draw[fill=black] (P1) circle (1.25pt);
\draw[fill=black] (P2) circle (1.25pt);
\draw[fill=black] (P3) circle (1.25pt);
\draw[fill=black] (P4) circle (1.25pt);
\draw[fill=black] (P5) circle (1.25pt);
\draw[fill=black] (P6) circle (1.25pt);

\draw[fill=black] (A12) circle (1.25pt);
\draw[fill=black] (A23) circle (1.25pt);
\draw[fill=black] (A34) circle (1.25pt);
\draw[fill=black] (A45) circle (1.25pt);
\draw[fill=black] (A56) circle (1.25pt);
\draw[fill=black] (A61) circle (1.25pt);

\draw[fill=white] (L1) circle (1.25pt);
\draw[fill=white] (L2) circle (1.25pt);
\draw[fill=white] (L3) circle (1.25pt);
\draw[fill=white] (L4) circle (1.25pt);
\draw[fill=white] (L5) circle (1.25pt);
\draw[fill=white] (L6) circle (1.25pt);

\draw[fill=white] (R1) circle (1.25pt);
\draw[fill=white] (R2) circle (1.25pt);
\draw[fill=white] (R3) circle (1.25pt);
\draw[fill=white] (R4) circle (1.25pt);
\draw[fill=white] (R5) circle (1.25pt);
\draw[fill=white] (R6) circle (1.25pt);


\draw[fill=black] (-0.925cm,-1.61cm) circle (1.25pt);
\draw[fill=black] (0.795cm,-1.235cm) circle (1.25pt);
\draw[fill=black] (0.660cm,-0.47cm) circle (1.25pt);
\draw[fill=black] (0.286cm,-0.601cm) circle (1.25pt);
\draw[fill=black] (0.905cm,0.72cm) circle (1.25pt);
\end{tikzpicture}

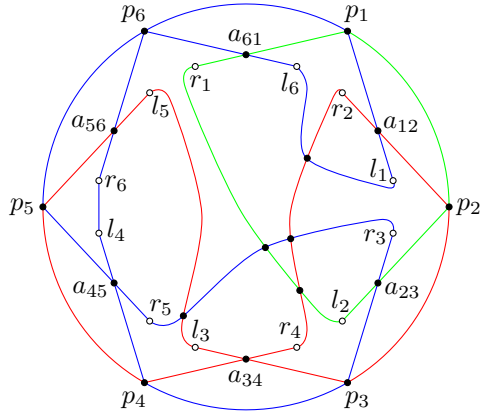
\captionof{figure}{A suitable connection between points of $\mathcal{O}$}
\end{center}
In this case, $\mathcal{C}$ consists of precisely $3$ closed curves with $17$ intersection points, i.e. $|V|=17$.
}\end{exmp}

Let $G=G(\mathcal{C})$ be the graph whose vertex set is $V$ and two vertices are joined by an edge if they are two consecutive points on one of curves from $\mathcal{C}$. 
In fact, we consider $G$ as a graph whose vertices are points on the plane and edges are parts of curves from $\mathcal{C}$ joining these points. 
It is easy to see that $G$ is a $4$-regular and the curves from $\mathcal{C}$ correspond to pairs of mutually reversed central circuits from $G$. 
We make the chess coloring of $G$ and split the set of its faces into the two sets corresponding to the colors. 
Let $b$ be the color of faces whose boundaries are cycles with vertices $p_i, p_j, a_{ij}$; as above, $w$ is the other color. 

As in Subsection 4.1, we obtain the dual graphs $\mathcal{R}_b(G)$ and $\mathcal{R}_{w}(G)$. 
These graphs are not necessarily simple, i.e. they may contain the following fragments: 

(A) loops, 

(B) multiple edges, 

(C) edges that belong to the boundary of one face only (in particular, edges with vertices of degree $1$), 

(D) pairs of faces whose intersection of boundaries contains more than one edge. 

\noindent If one of the cases (A) or (B) realizes in one of the graphs $\mathcal{R}_b(G), \mathcal{R}_{w}(G)$, then the case (C) or (D), respectively, realizes in the dual graph. 
\begin{exmp}\label{ex4}{\rm
Let $\mathcal{C}$ be as in Example \ref{ex3}. 
The graphs $\mathcal{R}_b(G)$ and $\mathcal{R}_{w}(G)$ are presented in Fig. 7a and Fig. 7b, respectively. 
\begin{center}
\begin{tikzpicture}[scale=0.8]
\begin{scope}
    \coordinate (P1) at (60:3cm);
    \coordinate (P2) at (0:3cm);
    \coordinate (P3) at (300:3cm);
    \coordinate (P4) at (240:3cm);
    \coordinate (P5) at (180:3cm);
    \coordinate (P6) at (120:3cm);

    \coordinate (A12) at (30:2.25cm);
    \coordinate (A23) at (330:2.25cm);
    \coordinate (A34) at (270:2.25cm);
    \coordinate (A45) at (210:2.25cm);
    \coordinate (A56) at (150:2.25cm);
    \coordinate (A61) at (90:2.25cm);

    \coordinate (L1) at ($(P1)!1.5!(A12)$);
    \coordinate (L2) at ($(P2)!1.5!(A23)$);
    \coordinate (L3) at ($(P3)!1.5!(A34)$);
    \coordinate (L4) at ($(P4)!1.5!(A45)$);
    \coordinate (L5) at ($(P5)!1.5!(A56)$);
    \coordinate (L6) at ($(P6)!1.5!(A61)$);

    \coordinate (R1) at ($(P1)!1.5!(A61)$);
    \coordinate (R2) at ($(P2)!1.5!(A12)$);
    \coordinate (R3) at ($(P3)!1.5!(A23)$);
    \coordinate (R4) at ($(P4)!1.5!(A34)$);
    \coordinate (R5) at ($(P5)!1.5!(A45)$);
    \coordinate (R6) at ($(P6)!1.5!(A56)$);

    \draw[color=black!40] (P1) -- (L1);
    \draw[color=black!40] (P1) -- (R1);
\draw [black!40,domain=0:60] plot ({3cm*cos(\x)}, {3cm*sin(\x)});
    \draw[color=black!40] (P2) -- (L2);
    \draw[color=black!40] (P2) -- (R2);
\draw [black!40,domain=300:360] plot ({3cm*cos(\x)}, {3cm*sin(\x)});
    \draw[color=black!40] (P3) -- (L3);
    \draw[color=black!40] (P3) -- (R3);
\draw [black!40,domain=240:300] plot ({3cm*cos(\x)}, {3cm*sin(\x)});
    \draw[color=black!40] (P4) -- (L4);
    \draw[color=black!40] (P4) -- (R4);
    \draw[color=black!40] (P5) -- (L5);
\draw [black!40,domain=180:240] plot ({3cm*cos(\x)}, {3cm*sin(\x)});
    \draw[color=black!40] (P5) -- (R5);
    \draw[color=black!40] (P6) -- (L6);
\draw [black!40,domain=120:180] plot ({3cm*cos(\x)}, {3cm*sin(\x)});
    \draw[color=black!40] (P6) -- (R6);
\draw [black!40,domain=60:120] plot ({3cm*cos(\x)}, {3cm*sin(\x)});

\node[color=black!40] at (60:3.3cm) {$p_1$};
\node[color=black!40] at (0:3.3cm) {$p_2$};
\node[color=black!40] at (300:3.3cm) {$p_3$};
\node[color=black!40] at (240:3.3cm) {$p_4$};
\node[color=black!40] at (180:3.3cm) {$p_5$};
\node[color=black!40] at (120:3.3cm) {$p_6$};

\node[color=black!40] at ($(28:2.6cm)+(-0.78cm,0cm)$) {$a_{12}$};
\node[color=black!40] at ($(90:2.5cm)+(-0.175cm,-0.5cm)$) {$a_{61}$};
\node[color=black!40] at (152:1.90cm) {$a_{56}$};
\node[color=black!40] at ($(208:1.88cm)+(0.03cm,-0.15cm)$) {$a_{45}$};
\node[color=black!40] at ($(270:2.54cm)+(-0.3cm,0.55cm)$) {$a_{34}$};
\node[color=black!40] at ($(332:2.62cm)+(-0.6cm,0.36cm)$) {$a_{23}$};

\node[color=black!40] at ($($(0,0)!0.88!(L1)$)+(0.5cm,0.0cm)$) {$l_1$};
\node[color=black!40] at ($($(0,0)!0.88!(L2)$)+(0.18cm,-0.5cm)$) {$l_2$};
\node[color=black!40] at ($($(0,0)!0.88!(L3)$)+(-0.2cm,-0.4cm)$) {$l_3$};
\node[color=black!40] at ($($(0,0)!0.88!(L4)$)+(-0.45cm,0)$) {$l_4$};
\node[color=black!40] at ($($(0,0)!0.88!(L5)$)+(-0.17cm,0.47cm)$) {$l_5$};
\node[color=black!40] at ($($(0,0)!0.88!(L6)$)+(0.32cm,0.35cm)$) {$l_6$};

\node[color=black!40] at ($($(0,0)!0.88!(R1)$)+(-0.2cm,0.44cm)$) {$r_1$};
\node[color=black!40] at ($($(0,0)!0.88!(R2)$)+(0.18cm,0.4cm)$) {$r_2$};
\node[color=black!40] at ($($(0,0)!0.88!(R3)$)+(0.5cm,0cm)$) {$r_3$};
\node[color=black!40] at ($($(0,0)!0.88!(R4)$)+(0.23cm,-0.48cm)$) {$r_4$};
\node[color=black!40] at ($($(0,0)!0.88!(R5)$)+(-0.25cm,-0.4cm)$) {$r_5$};
\node[color=black!40] at ($($(0,0)!0.88!(R6)$)+(-0.47cm,-0.02cm)$) {$r_6$};

    \draw[color=black!40] (L4) -- (R6);
\draw [black!40] plot [smooth] coordinates {(R2) ($(R2)+(-0.15cm,-0.1cm)$) ($(0,0)!0.6!($0.5*(R2)+0.5*(R4)$)$) ($(R4)+(0.15cm,0.3cm)$) (R4)};
\draw [black!40] plot [smooth] coordinates {(L2) ($(L2)+(-0.36cm,0.15cm)$) ($($(0,0)!0.1!($0.5*(L2)+0.5*(R1)$)$)+(-0.1cm,-0.1cm)$) ($(R1)+(-0.15cm,-0.3cm)$) (R1)};
\draw [black!40] plot [smooth] coordinates {(L1) ($(L1)+(-0.1cm,-0.1cm)$) ($(0,0)!0.6!($0.5*(L1)+0.5*(L6)$)$) ($(L6)+(0.15cm,-0.3cm)$) (L6)};
\draw [black!40] plot [smooth] coordinates {(L5) ($(L5)+(0.3cm,-0.1cm)$) ($(0,0)!0.6!($0.5*(L5)+0.5*(L3)$)$) ($(L3)+(-0.2cm,0.3cm)$) (L3)};
\draw [black!40] plot [smooth] coordinates {(R5) ($(R5)+(0.4cm,0cm)$) ($(0,0)!0.6!($0.5*(R5)+0.5*(R3)$)$) ($(R3)+(-0.2cm,0.2cm)$) (R3)};
\draw[color=black!40, fill=black!40] (P1) circle (1.25pt);
\draw[color=black!40, fill=black!40] (P2) circle (1.25pt);
\draw[color=black!40, fill=black!40] (P3) circle (1.25pt);
\draw[color=black!40, fill=black!40] (P4) circle (1.25pt);
\draw[color=black!40, fill=black!40] (P5) circle (1.25pt);
\draw[color=black!40, fill=black!40] (P6) circle (1.25pt);

\draw[color=black!40, fill=black!40] (A12) circle (1.25pt);
\draw[color=black!40, fill=black!40] (A23) circle (1.25pt);
\draw[color=black!40, fill=black!40] (A34) circle (1.25pt);
\draw[color=black!40, fill=black!40] (A45) circle (1.25pt);
\draw[color=black!40, fill=black!40] (A56) circle (1.25pt);
\draw[color=black!40, fill=black!40] (A61) circle (1.25pt);

\draw[color=black!40, fill=white] (L1) circle (1.25pt);
\draw[color=black!40, fill=white] (L2) circle (1.25pt);
\draw[color=black!40, fill=white] (L3) circle (1.25pt);
\draw[color=black!40, fill=white] (L4) circle (1.25pt);
\draw[color=black!40, fill=white] (L5) circle (1.25pt);
\draw[color=black!40, fill=white] (L6) circle (1.25pt);

\draw[color=black!40, fill=white] (R1) circle (1.25pt);
\draw[color=black!40, fill=white] (R2) circle (1.25pt);
\draw[color=black!40, fill=white] (R3) circle (1.25pt);
\draw[color=black!40, fill=white] (R4) circle (1.25pt);
\draw[color=black!40, fill=white] (R5) circle (1.25pt);
\draw[color=black!40, fill=white] (R6) circle (1.25pt);


\draw[color=black!40, fill=black!40] (-0.925cm,-1.61cm) circle (1.25pt); 
\draw[color=black!40, fill=black!40] (0.795cm,-1.235cm) circle (1.25pt); 
\draw[color=black!40, fill=black!40] (0.660cm,-0.47cm) circle (1.25pt); 
\draw[color=black!40, fill=black!40] (0.286cm,-0.601cm) circle (1.25pt); 
\draw[color=black!40, fill=black!40] (0.905cm,0.72cm) circle (1.25pt); 

    \coordinate (Q1) at (-0.925cm,-1.61cm);
    \coordinate (Q2) at (0.795cm,-1.235cm);
    \coordinate (Q3) at (0.660cm,-0.47cm);
    \coordinate (Q4) at (0.286cm,-0.601cm);
    \coordinate (Q5) at (0.905cm,0.72cm);

    \coordinate (v1) at (barycentric cs:P1=1,P2=1,A12=1);
    \coordinate (v2) at (barycentric cs:P2=1,P3=1,A23=1);
    \coordinate (v3) at (barycentric cs:P3=1,P4=1,A34=1);
    \coordinate (v4) at (barycentric cs:P4=1,P5=1,A45=1);
    \coordinate (v5) at (barycentric cs:P5=1,P6=1,A56=1);
    \coordinate (v6) at (barycentric cs:P6=1,P1=1,A61=1);
    \coordinate (v7) at (barycentric cs:R1=1,L6=1,A61=1,Q3=1,Q4=1,Q5=1);
    \coordinate (v8) at (barycentric cs:R2=1,L1=1,A12=1,Q5=1);
    \coordinate (v9) at (barycentric cs:L5=1,A56=1,R6=1,L4=1,A45=1,R5=1,Q1=1);
    \coordinate (v10) at (barycentric cs:L3=1,A34=1,R4=1,Q1=1,Q2=1,Q4=1);
    \coordinate (v11) at (barycentric cs:L2=1,A23=1,R3=1,Q2=1,Q3=1);

\draw[fill=red, color=red] (v1) circle (1.75pt);
\draw[fill=red, color=red] (v2) circle (1.75pt);
\draw[fill=red, color=red] (v3) circle (1.75pt);
\draw[fill=red, color=red] (v4) circle (1.75pt);
\draw[fill=red, color=red] (v5) circle (1.75pt);
\draw[fill=red, color=red] (v6) circle (1.75pt);
\draw[fill=red, color=red] (v7) circle (1.75pt);
\draw[fill=red, color=red] (v8) circle (1.75pt);
\draw[fill=red, color=red] (v9) circle (1.75pt);
\draw[fill=red, color=red] (v10) circle (1.75pt);
\draw[fill=red, color=red] (v11) circle (1.75pt);

    \draw[line width=1.1,  color=red] (v1) -- (v2);
    \draw[line width=1.1,  color=red] (v2) -- (v3);
    \draw[line width=1.1,  color=red] (v3) -- (v4);
    \draw[line width=1.1,  color=red] (v4) -- (v5);
    \draw[line width=1.1,  color=red] (v5) -- (v6);
    \draw[line width=1.1,  color=red] (v6) -- (v1);
    \draw[line width=1.1,  color=red] (v1) -- (v8);
    \draw[line width=1.1,  color=red] (v8) -- (v7);
    \draw[line width=1.1,  color=red] (v6) -- (v7);
    \draw[line width=1.1,  color=red] (v10) -- (v7);
    \draw[line width=1.1,  color=red] (v11) -- (v7);
    \draw[line width=1.1,  color=red] (v11) -- (v10);
    \draw[line width=1.1,  color=red] (v11) -- (v2);
    \draw[line width=1.1,  color=red] (v10) -- (v3);
    \draw[line width=1.1,  color=red] (v9) -- (v5);
    \draw[line width=1.1,  color=red] (v9) -- (v4);
    \draw[line width=1.1,  color=red] (v9) -- (v10);
\end{scope}
\begin{scope}[xshift=8cm]
    \coordinate (P1) at (60:3cm);
    \coordinate (P2) at (0:3cm);
    \coordinate (P3) at (300:3cm);
    \coordinate (P4) at (240:3cm);
    \coordinate (P5) at (180:3cm);
    \coordinate (P6) at (120:3cm);

    \coordinate (A12) at (30:2.25cm);
    \coordinate (A23) at (330:2.25cm);
    \coordinate (A34) at (270:2.25cm);
    \coordinate (A45) at (210:2.25cm);
    \coordinate (A56) at (150:2.25cm);
    \coordinate (A61) at (90:2.25cm);

    \coordinate (L1) at ($(P1)!1.5!(A12)$);
    \coordinate (L2) at ($(P2)!1.5!(A23)$);
    \coordinate (L3) at ($(P3)!1.5!(A34)$);
    \coordinate (L4) at ($(P4)!1.5!(A45)$);
    \coordinate (L5) at ($(P5)!1.5!(A56)$);
    \coordinate (L6) at ($(P6)!1.5!(A61)$);

    \coordinate (R1) at ($(P1)!1.5!(A61)$);
    \coordinate (R2) at ($(P2)!1.5!(A12)$);
    \coordinate (R3) at ($(P3)!1.5!(A23)$);
    \coordinate (R4) at ($(P4)!1.5!(A34)$);
    \coordinate (R5) at ($(P5)!1.5!(A45)$);
    \coordinate (R6) at ($(P6)!1.5!(A56)$);

    \draw[color=black!40] (P1) -- (L1);
    \draw[color=black!40] (P1) -- (R1);
\draw [black!40,domain=0:60] plot ({3cm*cos(\x)}, {3cm*sin(\x)});
    \draw[color=black!40] (P2) -- (L2);
    \draw[color=black!40] (P2) -- (R2);
\draw [black!40,domain=300:360] plot ({3cm*cos(\x)}, {3cm*sin(\x)});
    \draw[color=black!40] (P3) -- (L3);
    \draw[color=black!40] (P3) -- (R3);
\draw [black!40,domain=240:300] plot ({3cm*cos(\x)}, {3cm*sin(\x)});
    \draw[color=black!40] (P4) -- (L4);
    \draw[color=black!40] (P4) -- (R4);
    \draw[color=black!40] (P5) -- (L5);
\draw [black!40,domain=180:240] plot ({3cm*cos(\x)}, {3cm*sin(\x)});
    \draw[color=black!40] (P5) -- (R5);
    \draw[color=black!40] (P6) -- (L6);
\draw [black!40,domain=120:180] plot ({3cm*cos(\x)}, {3cm*sin(\x)});
    \draw[color=black!40] (P6) -- (R6);
\draw [black!40,domain=60:120] plot ({3cm*cos(\x)}, {3cm*sin(\x)});

\node[color=black!40] at (69:2.85cm) {$p_1$};
\node[color=black!40] at ($(0:3.3cm)+(-0.03cm,0.2cm)$) {$p_2$};
\node[color=black!40] at ($(300:3.3cm)+(-0.3cm,0.5cm)$) {$p_3$};
\node[color=black!40] at ($(240:3.3cm)+(0.35cm,0.5cm)$) {$p_4$};
\node[color=black!40] at (180:3.3cm) {$p_5$};
\node[color=black!40] at (120:3.3cm) {$p_6$};

\node[color=black!40] at ($(28:2.6cm)+(0.03cm,-0.13cm)$) {$a_{12}$};
\node[color=black!40] at ($(90:2.5cm)+(-0.0cm,-0.02cm)$) {$a_{61}$};
\node[color=black!40] at ($(152:1.90cm)+(-0.55cm,0.45cm)$) {$a_{56}$};
\node[color=black!40] at ($(208:1.88cm)+(-0.55cm,-0.45cm)$) {$a_{45}$};
\node[color=black!40] at ($(270:2.54cm)+(0cm,0cm)$) {$a_{34}$};
\node[color=black!40] at ($(332:2.62cm)+(-0.01cm,0.082cm)$) {$a_{23}$};

\node[color=black!40] at ($($(0,0)!0.88!(L1)$)+(0.03cm,0.2cm)$) {$l_1$};
\node[color=black!40] at ($($(0,0)!0.88!(L2)$)+(0.13cm,0.12cm)$) {$l_2$};
\node[color=black!40] at ($($(0,0)!0.88!(L3)$)+(0cm,0.0cm)$) {$l_3$};
\node[color=black!40] at ($($(0,0)!0.88!(L4)$)+(0.0cm,0)$) {$l_4$};
\node[color=black!40] at ($($(0,0)!0.88!(L5)$)+(-0.15cm,-0.1cm)$) {$l_5$};
\node[color=black!40] at ($($(0,0)!0.88!(L6)$)+(0.0cm,0.0cm)$) {$l_6$};

\node[color=black!40] at ($($(0,0)!0.88!(R1)$)+(0cm,0.0cm)$) {$r_1$};
\node[color=black!40] at ($($(0,0)!0.88!(R2)$)+(0.18cm,-0.1cm)$) {$r_2$};
\node[color=black!40] at ($($(0,0)!0.88!(R3)$)+(0.0cm,-0.13cm)$) {$r_3$};
\node[color=black!40] at ($($(0,0)!0.88!(R4)$)+(0cm,-0.05cm)$) {$r_4$};
\node[color=black!40] at ($($(0,0)!0.88!(R5)$)+(-0.16cm,0.12cm)$) {$r_5$};
\node[color=black!40] at ($($(0,0)!0.88!(R6)$)+(0.0cm,-0.02cm)$) {$r_6$};

    \draw[color=black!40] (L4) -- (R6);
\draw [black!40] plot [smooth] coordinates {(R2) ($(R2)+(-0.15cm,-0.1cm)$) ($(0,0)!0.6!($0.5*(R2)+0.5*(R4)$)$) ($(R4)+(0.15cm,0.3cm)$) (R4)};
\draw [black!40] plot [smooth] coordinates {(L2) ($(L2)+(-0.36cm,0.15cm)$) ($($(0,0)!0.1!($0.5*(L2)+0.5*(R1)$)$)+(-0.1cm,-0.1cm)$) ($(R1)+(-0.15cm,-0.3cm)$) (R1)};
\draw [black!40] plot [smooth] coordinates {(L1) ($(L1)+(-0.1cm,-0.1cm)$) ($(0,0)!0.6!($0.5*(L1)+0.5*(L6)$)$) ($(L6)+(0.15cm,-0.3cm)$) (L6)};
\draw [black!40] plot [smooth] coordinates {(L5) ($(L5)+(0.3cm,-0.1cm)$) ($(0,0)!0.6!($0.5*(L5)+0.5*(L3)$)$) ($(L3)+(-0.2cm,0.3cm)$) (L3)};
\draw [black!40] plot [smooth] coordinates {(R5) ($(R5)+(0.4cm,0cm)$) ($(0,0)!0.6!($0.5*(R5)+0.5*(R3)$)$) ($(R3)+(-0.2cm,0.2cm)$) (R3)};
\draw[color=black!40, fill=black!40] (P1) circle (1.25pt);
\draw[color=black!40, fill=black!40] (P2) circle (1.25pt);
\draw[color=black!40, fill=black!40] (P3) circle (1.25pt);
\draw[color=black!40, fill=black!40] (P4) circle (1.25pt);
\draw[color=black!40, fill=black!40] (P5) circle (1.25pt);
\draw[color=black!40, fill=black!40] (P6) circle (1.25pt);

\draw[color=black!40, fill=black!40] (A12) circle (1.25pt);
\draw[color=black!40, fill=black!40] (A23) circle (1.25pt);
\draw[color=black!40, fill=black!40] (A34) circle (1.25pt);
\draw[color=black!40, fill=black!40] (A45) circle (1.25pt);
\draw[color=black!40, fill=black!40] (A56) circle (1.25pt);
\draw[color=black!40, fill=black!40] (A61) circle (1.25pt);

\draw[color=black!40, fill=white] (L1) circle (1.25pt);
\draw[color=black!40, fill=white] (L2) circle (1.25pt);
\draw[color=black!40, fill=white] (L3) circle (1.25pt);
\draw[color=black!40, fill=white] (L4) circle (1.25pt);
\draw[color=black!40, fill=white] (L5) circle (1.25pt);
\draw[color=black!40, fill=white] (L6) circle (1.25pt);

\draw[color=black!40, fill=white] (R1) circle (1.25pt);
\draw[color=black!40, fill=white] (R2) circle (1.25pt);
\draw[color=black!40, fill=white] (R3) circle (1.25pt);
\draw[color=black!40, fill=white] (R4) circle (1.25pt);
\draw[color=black!40, fill=white] (R5) circle (1.25pt);
\draw[color=black!40, fill=white] (R6) circle (1.25pt);


\draw[color=black!40, fill=black!40] (-0.925cm,-1.61cm) circle (1.25pt); 
\draw[color=black!40, fill=black!40] (0.795cm,-1.235cm) circle (1.25pt); 
\draw[color=black!40, fill=black!40] (0.660cm,-0.47cm) circle (1.25pt); 
\draw[color=black!40, fill=black!40] (0.286cm,-0.601cm) circle (1.25pt); 
\draw[color=black!40, fill=black!40] (0.905cm,0.72cm) circle (1.25pt); 

    \coordinate (Q1) at (-0.925cm,-1.61cm);
    \coordinate (Q2) at (0.795cm,-1.235cm);
    \coordinate (Q3) at (0.660cm,-0.47cm);
    \coordinate (Q4) at (0.286cm,-0.601cm);
    \coordinate (Q5) at (0.905cm,0.72cm);

    \coordinate (w1) at (barycentric cs:P1=1,L6=1,R2=1);
    \coordinate (w2) at (barycentric cs:R3=1,P2=1,L1=1);
    \coordinate (w3) at (barycentric cs:P3=1,R4=1,L2=1);
    \coordinate (w4) at (barycentric cs:P4=1,R5=1,L3=1);
    \coordinate (w5) at (barycentric cs:P5=1,R6=1,L4=1);
    \coordinate (w6) at (barycentric cs:P6=1,R1=1,L5=1);
    \coordinate (w7) at (barycentric cs:Q2=1,Q3=1,Q4=1);
    \coordinate (w8) at ($(P2)+(0.7cm,0)$);

\draw[fill=blue, color=blue] (w1) circle (1.75pt);
\draw[fill=blue, color=blue] (w2) circle (1.75pt);
\draw[fill=blue, color=blue] (w3) circle (1.75pt);
\draw[fill=blue, color=blue] (w4) circle (1.75pt);
\draw[fill=blue, color=blue] (w5) circle (1.75pt);
\draw[fill=blue, color=blue] (w6) circle (1.75pt);
\draw[fill=blue, color=blue] (w7) circle (1.75pt);
\draw[fill=blue, color=blue] (w8) circle (1.75pt);

    \draw[line width=1.1,  color=blue] (w1) -- (w2);
    \draw[line width=1.1,  color=blue] (w2) -- (w3);
    \draw[line width=1.1,  color=blue] (w3) -- (w4);
    \draw[line width=1.1,  color=blue] (w4) -- (w5);
    \draw[line width=1.1,  color=blue] (w5) -- (w6);
    \draw[line width=1.1,  color=blue] (w6) -- (w1);
    \draw[line width=1.1,  color=blue] (w4) -- (w6);
    \draw[line width=1.1,  color=blue] (w7) -- (w6);
    \draw[line width=1.1,  color=blue] (w7) -- (w2);
    \draw[line width=1.1,  color=blue] (w7) -- (w3);

\draw [line width=1.1,blue] plot [smooth] coordinates {(w2) ($(w2)+(-0.18cm,0.1cm)$) ($(0,0)!0.7!($0.5*(w1)+0.5*(w2)$)$) ($(w1)+(-0.08cm,-0.17cm)$)
 (w1)};

    \draw[line width=1.1, color=blue] (w8) -- (w2);
\draw [line width=1.1,blue] plot [smooth] coordinates {(w8) ($(0,0)!1.25!(A12)$)  (w1)};
\draw [line width=1.1,blue] plot [smooth] coordinates {(w8) ($(0,0)!1.25!(A23)$)  (w3)};
\draw [line width=1.1,blue] plot [smooth] coordinates {(w8) ($(0,0)!1.5!(A12)$)  ($(0,0)!1.1!(P1)$) ($(0,0)!1.3!(A61)$) (w6)};
\draw [line width=1.1,blue] plot [smooth] coordinates {(w8) ($(0,0)!1.5!(A23)$)  ($(0,0)!1.1!(P3)$) ($(0,0)!1.3!(A34)$) (w4)};
\draw [line width=1.1,blue] plot [smooth] coordinates {(w8) ($(0,0)!1.8!(A23)$)  ($(0,0)!1.3!(P3)$) ($(0,0)!1.5!(A34)$) ($(0,0)!1.1!(P4)$) ($(0,0)!1.5!(A45)$) (w5)};
\end{scope}
\node at (0cm,-4cm) {$(a)$};
\node at (8.33cm,-4cm) {$(b)$};
\end{tikzpicture}

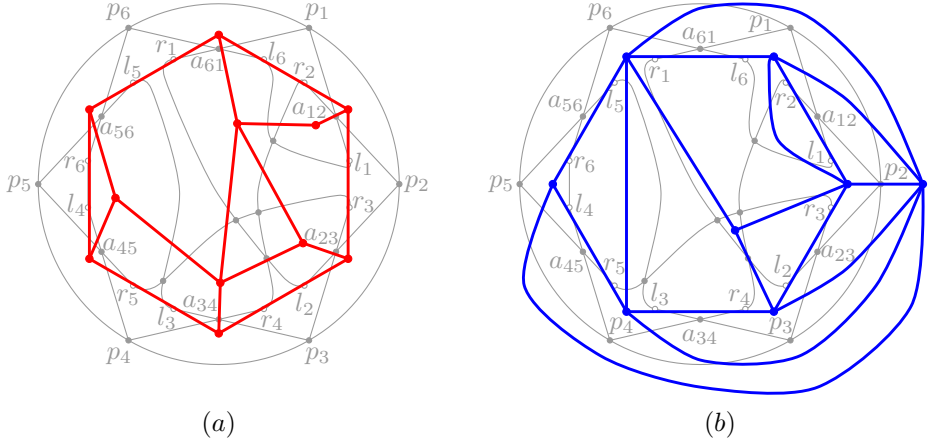
\captionof{figure}{The dual graphs $\mathcal{R}_b(G)$ and $\mathcal{R}_w(G)$}
\end{center}
The graph $\mathcal{R}_b(G)$ contains a pair of faces with two common edges (the case (D)) which corresponds to a double edge in $\mathcal{R}_w(G)$ (the case (B)).  
}\end{exmp}
Now, we show how modify the graph $G$ such that the connections between the points from $\mathcal{O}$ induced by the relation $\sim$ do not change and the graphs $\mathcal{R}_b(G), \mathcal{R}_{w}(G)$ become simple. 

Suppose that $e$ is an edge joining the vertices $v'$ and $v''$ in $\mathcal{R}_b(G)$ or $\mathcal{R}_{w}(G)$ (both the cases are similar). 
If $v'\neq v''$, then we consider the following edges in the same graph ($\mathcal{R}_b(G)$ or $\mathcal{R}_{w}(G)$):

$\bullet$ $e'_+$ and $e'_-$ which occur directly after $e$ in the clockwise and the anticlockwise order on edges incident to $v'$, respectively; 

$\bullet$ $e''_+$ and $e''_-$ which occur directly after $e$ in the clockwise and the anticlockwise order on edges incident to $v''$, respectively; 

\noindent see Fig. 8a. If $v'=v''$, then we exclude the case when the loop $e$ is the boundary of a face (this case will be considered separately). 
The edge $e$ splits the plane into two parts and we consider the following edges:  

$\bullet$ $e'_+$ and $e'_-$ are the edges contained in one of these parts which occur directly after $e$ in the clockwise and the anticlockwise order on edges incident to $v'$, respectively; 

$\bullet$ $e''_+$ and $e''_-$ are the edges contained in the other part of the plane which occur directly after $e$ in the clockwise and the anticlockwise order on edges incident to $v''$, respectively; 

\noindent see Fig. 8b. 
There are precisely two parts of central circuits in $G$ (up to reversing) that pass through $e$ (since $e,e'_\delta,e''_\delta$ are vertices of $G$, where $\delta\in\{+,-\}$): 
$$\dots,e'_+,e,e''_+,\dots\hspace{1cm}\text{ and }\hspace{1cm}\dots,e'_-,e,e''_-,\dots$$
which are marked in blue and red in Fig. 8. 
\begin{center}
\begin{tikzpicture}[scale=1.4]
\begin{scope}[scale=1.1]
\draw[fill=black] (-1cm,0) circle (1.3pt);
\draw[fill=black] (1cm,0) circle (1.3pt);

    \draw (-1cm,0) -- (1cm,0);

    \coordinate (v11) at ($(240:1cm)+(-1cm,0)$);
    \coordinate (v12) at ($(120:1cm)+(-1cm,0)$);
    \coordinate (v21) at ($(60:1cm)+(1cm,0)$);
    \coordinate (v22) at ($(-60:1cm)+(1cm,0)$);

    \draw (-1cm,0) -- (v11);
    \draw (-1cm,0) -- (v12);
    \draw (1cm,0) -- (v21);
    \draw (1cm,0) -- (v22);

\begin{scope}[rotate around={90:($(240:0.67cm)+(-1cm,0)$)}]
   \draw[color=gray] ($($(0,0)!1.2!($(240:0.67cm)+(-1cm,0)$)$)!0.2!(0,0)$) -- ($(0,0)!1.1!($(240:0.67cm)+(-1cm,0)$)$);
\end{scope}
\begin{scope}[rotate around={270:($(240:0.67cm)+(-1cm,0)$)}]
   \draw[color=gray] ($($(0,0)!1.2!($(240:0.67cm)+(-1cm,0)$)$)!0.2!(0,0)$) -- ($(0,0)!1.1!($(240:0.67cm)+(-1cm,0)$)$);
\end{scope}

\begin{scope}[rotate around={90:($(120:0.67cm)+(-1cm,0)$)}]
   \draw[color=gray] ($($(120:0.67cm)+(-1cm,0)$)!0.1!(0,0)$) -- ($(0,0)!1.1!($(120:0.67cm)+(-1cm,0)$)$);
\end{scope}
\begin{scope}[rotate around={270:($(120:0.67cm)+(-1cm,0)$)}]
\end{scope}

\begin{scope}[rotate around={90:($(60:0.67cm)+(1cm,0)$)}]
   \draw[color=gray] ($(60:0.67cm)+(1cm,0)$)-- ($(0,0)!1.1!($(60:0.67cm)+(1cm,0)$)$);
\end{scope}
\begin{scope}[rotate around={270:($(60:0.67cm)+(1cm,0)$)}]
   \draw[color=gray] ($($(0,0)!1.2!($(60:0.67cm)+(1cm,0)$)$)!0.2!(0,0)$) -- ($(0,0)!1.1!($(60:0.67cm)+(1cm,0)$)$);
\end{scope}

\begin{scope}[rotate around={90:($(-60:0.67cm)+(1cm,0)$)}]
   \draw[color=gray] ($(-60:0.67cm)+(1cm,0)$) -- ($(0,0)!1.1!($(-60:0.67cm)+(1cm,0)$)$);
\end{scope}
\begin{scope}[rotate around={270:($(-60:0.67cm)+(1cm,0)$)}]
   \draw[color=gray] ($($(0,0)!1.2!($(-60:0.67cm)+(1cm,0)$)$)!0.2!(0,0)$) -- ($(0,0)!1.1!($(-60:0.67cm)+(1cm,0)$)$);
\end{scope}

    \draw[color=blue] (0,0) -- ($(0,0)!1.2!($(240:0.67cm)+(-1cm,0)$)$);
    \draw[color=red] (0,0) -- ($(0,0)!1.2!($(120:0.67cm)+(-1cm,0)$)$);
    \draw[color=blue] (0,0) -- ($(0,0)!1.2!($(60:0.67cm)+(1cm,0)$)$);
    \draw[color=red] (0,0) -- ($(0,0)!1.2!($(-60:0.67cm)+(1cm,0)$)$);

    \draw (-1cm,0) -- ($(150:0.25cm)+(-1cm,0)$);
    \draw (-1cm,0) -- ($(180:0.25cm)+(-1cm,0)$);
    \draw (-1cm,0) -- ($(210:0.25cm)+(-1cm,0)$);

    \draw (1cm,0) -- ($(30:0.25cm)+(1cm,0)$);
    \draw (1cm,0) -- ($(0:0.25cm)+(1cm,0)$);
    \draw (1cm,0) -- ($(330:0.25cm)+(1cm,0)$);

\node at (-0.95cm,-0.2cm) {$v'$};
\node at (0.88cm,-0.2cm) {$v''$};

\node at (0,-0.2cm) {$e$};

\node at (-1.123cm,-0.695cm) {$e'_+$};
\node at (1.07cm,-0.695cm) {$e''_-$};

\node at (-1.123cm,0.695cm) {$e'_-$};
\node at (1.07cm,0.695cm) {$e''_+$};

\end{scope}

\begin{scope}[xshift=3cm, scale=1.5]

\draw[fill=black] (0,0) circle (0.9pt);

    \draw (0,0) -- (135:0.5cm);
    \draw (0,0) -- (157.5:0.25cm);
    \draw (0,0) -- (180:0.25cm);
    \draw (0,0) -- (202.5:0.25cm);
    \draw (0,0) -- (225.5:0.5cm);

    \draw (0,0) -- (45:0.5cm);
    \draw (0,0) -- (22.5:0.25cm);
    \draw (0,0) -- (0:0.25cm);
    \draw (0,0) -- (337.5:0.25cm);
    \draw (0,0) -- (315.5:0.5cm);

\draw (0.75cm,0) circle (0.75cm);

    \coordinate (e2p) at ($(0,0)!0.7!(135:0.5cm)$);
    \coordinate (e1p) at ($(0,0)!0.7!(225:0.5cm)$);
    \coordinate (e1b) at ($(0,0)!0.7!(45:0.5cm)$);
    \coordinate (e2b) at ($(0,0)!0.7!(315:0.5cm)$);
    \coordinate (e) at (1.5cm,0);

    \draw[color=gray] ($(e2b)+(-0.02cm,0.08cm)$) -- ($(e2b)+(0.02cm,-0.08cm)$);
    \draw[color=gray] ($(e2p)+(-0.02cm,0.08cm)$) -- ($(e2p)+(0.02cm,-0.08cm)$);

\draw [red] plot [smooth] coordinates {($(e2p)+(225:0.08cm)$) (e2p) ($(120:1cm)+(0.75cm,0)$) ($(96:0.95cm)+(0.75cm,0)$) ($(72:0.9cm)+(0.75cm,0)$) ($(48:0.85cm)+(0.75cm,0)$) ($(24:0.85cm)+(0.75cm,0)$) (e) ($(-48:0.65cm)+(0.75cm,0)$) ($(-72:0.65cm)+(0.75cm,0)$) ($(-96:0.65cm)+(0.75cm,0)$) ($(-120:0.65cm)+(0.75cm,0)$) ($(-144:0.65cm)+(0.75cm,0)$) (e2b) ($(e2b)+(45:0.08cm)$)};

    \draw[color=gray] ($(e1b)+(0.02cm,0.08cm)$) -- ($(e1b)+(-0.02cm,-0.08cm)$);
    \draw[color=gray] ($(e1p)+(0.02cm,0.08cm)$) -- ($(e1p)+(-0.02cm,-0.08cm)$);

\draw [blue] plot [smooth] coordinates {($(e1p)+(-225:0.08cm)$) (e1p) ($(-120:1cm)+(0.75cm,0)$) ($(-96:0.95cm)+(0.75cm,0)$) ($(-72:0.9cm)+(0.75cm,0)$) ($(-48:0.85cm)+(0.75cm,0)$) ($(-24:0.85cm)+(0.75cm,0)$) (e) ($(48:0.65cm)+(0.75cm,0)$) ($(72:0.65cm)+(0.75cm,0)$) ($(96:0.65cm)+(0.75cm,0)$) ($(120:0.65cm)+(0.75cm,0)$) ($(144:0.65cm)+(0.75cm,0)$) (e1b) ($(e1b)+(-45:0.08cm)$)};


\node at ($($(0,0)!1!(120:0.55cm)$)+(0.015cm,0.03cm)$) {$e'_-$};
\node at ($($(0,0)!1!(-120:0.55cm)$)+(0cm,0cm)$) {$e'_+$};

\node at ($($(0,0)!1!(60:0.55cm)$)+(0.19cm,-0.2cm)$) {$e''_+$};
\node at ($($(0,0)!1!(-60:0.55cm)$)+(0.19cm,0.2cm)$) {$e''_-$};

\node at (1.6cm,0) {$e$};

\node at (-0.05cm,0.2cm) {$v'$};

\end{scope}


\node at (0cm,-1.75cm) {$(a)$};

\node at (4cm,-1.75cm) {$(b)$};

\end{tikzpicture}

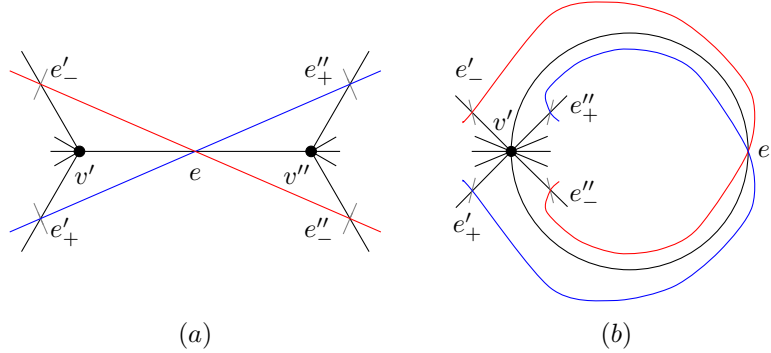
\captionof{figure}{Parts of central circuits}
\end{center}

\noindent For each of these cases we replace $e$ by the graphs presented in Fig. 9a and Fig. 9b, respectively. 

\begin{center}
\begin{tikzpicture}[scale=1.5]
\begin{scope}
\draw[fill=black] (-1cm,0) circle (1.3pt);
\draw[fill=black] (1cm,0) circle (1.3pt);


    \coordinate (v11) at ($(240:1cm)+(-1cm,0)$);
    \coordinate (v12) at ($(120:1cm)+(-1cm,0)$);
    \coordinate (v21) at ($(60:1cm)+(1cm,0)$);
    \coordinate (v22) at ($(-60:1cm)+(1cm,0)$);

    \draw (-1cm,0) -- (v11);
    \draw (-1cm,0) -- (v12);
    \draw (1cm,0) -- (v21);
    \draw (1cm,0) -- (v22);

\draw[fill=black] (-0.33cm,0.33cm) circle (1.3pt);
\draw[fill=black] (-0.33cm,-0.33cm) circle (1.3pt);
\draw[fill=black] (0.33cm,0.33cm) circle (1.3pt);
\draw[fill=black] (0.33cm,-0.33cm) circle (1.3pt);

\draw[fill=black] (0cm,1cm) circle (1.3pt);

    \draw (-0.33cm,0.33cm) -- (-0.33cm,-0.33cm) -- (0.33cm,-0.33cm) -- (0.33cm,0.33cm) -- cycle;
    \draw (-0.33cm,0.33cm) -- (0cm,1cm) -- (0.33cm,0.33cm); 
    \draw (-0.33cm,0.33cm) -- (-1cm,0cm) -- (-0.33cm,-0.33cm); 
    \draw (0.33cm,0.33cm) -- (1cm,0cm) -- (0.33cm,-0.33cm); 
    \draw (0cm,1cm) -- (1cm,0cm);
\draw plot [smooth] coordinates {(-0.33cm,-0.33cm) ($($(-0.33cm,-0.33cm)!0.2!(1cm,0cm)$)+(0.3cm,-0.5cm)$) ($($(-0.33cm,-0.33cm)!0.5!(1cm,0cm)$)+(0.4cm,-0.5cm)$) (1cm,0cm)};

    \coordinate (x) at ($(-1cm,0)!0.5!(-0.33cm,0.33cm)$);
    \coordinate (u) at ($(-1cm,0)!0.5!(-0.33cm,-0.33cm)$);
    \coordinate (p) at ($(-0.33cm,0.33cm)!0.5!(-0.33cm,-0.33cm)$);

    \coordinate (k) at ($(1cm,0)!0.5!(0.33cm,0.33cm)$);
    \coordinate (m) at ($(1cm,0)!0.5!(0.33cm,-0.33cm)$);
    \coordinate (w) at ($(0.33cm,0.33cm)!0.5!(0.33cm,-0.33cm)$);

    \coordinate (z) at ($(1cm,0)!0.5!(0,1cm)$);

    \coordinate (y) at ($(-0.33cm,0.33cm)!0.5!(0,1cm)$);
    \coordinate (n) at ($(0.33cm,0.33cm)!0.5!(0,1cm)$);
    \coordinate (r) at ($(-0.33cm,0.33cm)!0.5!(0.33cm,0.33cm)$);

    \coordinate (s) at ($(-0.33cm,-0.33cm)!0.5!(0.33cm,-0.33cm)$);

    \coordinate (t) at (0.5cm,-0.77cm);

\begin{scope}[rotate around={90:($(240:0.67cm)+(-1cm,0)$)}]
   \draw[color=gray] ($($(0,0)!1.2!($(240:0.67cm)+(-1cm,0)$)$)!0.2!(0,0)$) -- ($(0,0)!1.1!($(240:0.67cm)+(-1cm,0)$)$);
\end{scope}
\begin{scope}[rotate around={270:($(240:0.67cm)+(-1cm,0)$)}]
   \draw[color=gray] ($($(0,0)!1.2!($(240:0.67cm)+(-1cm,0)$)$)!0.2!(0,0)$) -- ($(0,0)!1.1!($(240:0.67cm)+(-1cm,0)$)$);
\end{scope}

\begin{scope}[rotate around={90:($(120:0.67cm)+(-1cm,0)$)}]
   \draw[color=gray] ($($(120:0.67cm)+(-1cm,0)$)!0.1!(0,0)$) -- ($(0,0)!1.1!($(120:0.67cm)+(-1cm,0)$)$);
\end{scope}
\begin{scope}[rotate around={270:($(120:0.67cm)+(-1cm,0)$)}]
\end{scope}

\begin{scope}[rotate around={90:($(60:0.67cm)+(1cm,0)$)}]
   \draw[color=gray] ($(60:0.67cm)+(1cm,0)$)-- ($(0,0)!1.1!($(60:0.67cm)+(1cm,0)$)$);
\end{scope}
\begin{scope}[rotate around={270:($(60:0.67cm)+(1cm,0)$)}]
   \draw[color=gray] ($($(0,0)!1.2!($(60:0.67cm)+(1cm,0)$)$)!0.2!(0,0)$) -- ($(0,0)!1.1!($(60:0.67cm)+(1cm,0)$)$);
\end{scope}

\begin{scope}[rotate around={90:($(-60:0.67cm)+(1cm,0)$)}]
   \draw[color=gray] ($(-60:0.67cm)+(1cm,0)$) -- ($(0,0)!1.1!($(-60:0.67cm)+(1cm,0)$)$);
\end{scope}
\begin{scope}[rotate around={270:($(-60:0.67cm)+(1cm,0)$)}]
   \draw[color=gray] ($($(0,0)!1.2!($(-60:0.67cm)+(1cm,0)$)$)!0.2!(0,0)$) -- ($(0,0)!1.1!($(-60:0.67cm)+(1cm,0)$)$);
\end{scope}


\draw [blue] plot [smooth] coordinates {($(240:0.67cm)+(-1cm,0)$) (u) (p) (r) (n) (z) ($(60:0.67cm)+(1cm,0)$)};

\draw [red] plot [smooth] coordinates {($(120:0.67cm)+(-1cm,0)$) (x) (p) (s) ($($0.5*(s)+0.5*(m)$)+(0.1cm,-0.2cm)$) (m) (k) (n) (y) (x) (u) ($($0.5*(u)+0.5*(t)$)+(0.15cm,-0.35cm)$) (t) (m) (w) (r) (y) ($($0.5*(y)+0.5*(z)$)+(-0.25cm,0.6cm)$) ($($0.5*(y)+0.5*(z)$)+(0.15cm,0.35cm)$) (z) (k) (w) (s) (t) ($(-60:0.67cm)+(1cm,0)$)};

\begin{scope}[rotate around={0:($(240:0.67cm)+(-1cm,0)$)}]
   \draw[color=blue] ($(240:0.67cm)+(-1cm,0)$) -- ($(0,0)!1.1!($(240:0.67cm)+(-1cm,0)$)$);
\end{scope}
\begin{scope}[rotate around={0:($(120:0.67cm)+(-1cm,0)$)}]
   \draw[color=red] ($(120:0.67cm)+(-1cm,0)$) -- ($(0,0)!1.1!($(120:0.67cm)+(-1cm,0)$)$);
\end{scope}

\begin{scope}[rotate around={-10:($(60:0.67cm)+(1cm,0)$)}]
   \draw[color=blue] ($(60:0.67cm)+(1cm,0)$)-- ($(0,0)!1.1!($(60:0.67cm)+(1cm,0)$)$);
\end{scope}
\begin{scope}[rotate around={30:($(-60:0.67cm)+(1cm,0)$)}]
   \draw[color=red] ($(-60:0.67cm)+(1cm,0)$)-- ($(0,0)!1.1!($(-60:0.67cm)+(1cm,0)$)$);
\end{scope}

    \draw (-1cm,0) -- ($(150:0.25cm)+(-1cm,0)$);
    \draw (-1cm,0) -- ($(180:0.25cm)+(-1cm,0)$);
    \draw (-1cm,0) -- ($(210:0.25cm)+(-1cm,0)$);

    \draw (1cm,0) -- ($(30:0.25cm)+(1cm,0)$);
    \draw (1cm,0) -- ($(0:0.25cm)+(1cm,0)$);
    \draw (1cm,0) -- ($(330:0.25cm)+(1cm,0)$);

\node at (-0.95cm,0.18cm) {$v'$};
\node at (1cm,0.23cm) {$v''$};

\node at (-1.123cm,-0.665cm) {$e'_+$};
\node at (1.13cm,-0.83cm) {$e''_-$};

\node at (-1.123cm,0.695cm) {$e'_-$};
\node at (1.13cm,0.73cm) {$e''_+$};

\end{scope}

\begin{scope}[xshift=3cm, scale=1.3]

\draw[fill=black] (0,0) circle (0.9pt);

    \draw (0,0) -- (135:0.5cm);
    \draw (0,0) -- (157.5:0.25cm);
    \draw (0,0) -- (180:0.25cm);
    \draw (0,0) -- (202.5:0.25cm);
    \draw (0,0) -- (225.5:0.5cm);

    \draw (0,0) -- (45:0.5cm);
    \draw (0,0) -- (22.5:0.25cm);
    \draw (0,0) -- (0:0.25cm);
    \draw (0,0) -- (337.5:0.25cm);
    \draw (0,0) -- (315.5:0.5cm);

\draw (0.75cm,0) circle (0.75cm);

    \coordinate (e2p) at ($(0,0)!0.7!(135:0.5cm)$);
    \coordinate (e1p) at ($(0,0)!0.7!(225:0.5cm)$);
    \coordinate (e1b) at ($(0,0)!0.7!(45:0.5cm)$);
    \coordinate (e2b) at ($(0,0)!0.7!(315:0.5cm)$);
    \coordinate (e) at (1.5cm,0);

    \coordinate (w1) at (0.75cm,0.75cm);
    \coordinate (w2) at ($0.5*(1.38cm,0.4cm)+0.5*(1.9cm,0)$);
    \coordinate (w9) at ($0.5*(1.38cm,0.4cm)+0.5*(1.1cm,0)$);
    \coordinate (w3) at ($($(w2)!0.43!(w9)$)+(0,-0.02cm)$);
    \coordinate (w4) at ($0.5*(1.5cm,0)+0.5*(1.1cm,0)$);
    \coordinate (w5) at ($0.5*(1.5cm,0)+0.5*(1.9cm,0)$);
    \coordinate (w6) at (1.08cm,-0.37cm);
    \coordinate (w8) at (1.705cm,-0.37cm);
    \coordinate (w7) at (1.23cm,-0.58cm);



    \draw[color=gray] ($(e2b)+(-0.02cm,0.08cm)$) -- ($(e2b)+(0.02cm,-0.08cm)$);
    \draw[color=gray] ($(e2p)+(-0.02cm,0.08cm)$) -- ($(e2p)+(0.02cm,-0.08cm)$);


    \draw[color=gray] ($(e1b)+(0.02cm,0.08cm)$) -- ($(e1b)+(-0.02cm,-0.08cm)$);
    \draw[color=gray] ($(e1p)+(0.02cm,0.08cm)$) -- ($(e1p)+(-0.02cm,-0.08cm)$);

\draw [red] plot [smooth] coordinates {($(e2p)+(225:0.08cm)$) (e2p) ($(150:0.94cm)+(0.75cm,0)$) ($(135:0.87cm)+(0.75cm,0)$) (w1) ($(45:0.55cm)+(0.75cm,0)$) (w9) (w4) (w7) (w8) ($(-15:1.12cm)+(0.75cm,0)$) ($(0:1.3cm)+(0.75cm,0)$) ($(10:1.2cm)+(0.75cm,0)$) (w2) (w3) (w4) (w6) 
($(280:0.55cm)+(0.75cm,0)$) ($(250:0.57cm)+(0.75cm,0)$) ($(235:0.59cm)+(0.75cm,0)$) ($(220:0.62cm)+(0.75cm,0)$) ($(210:0.62cm)+(0.75cm,0)$) 
(e2b) ($(e2b)+(45:0.08cm)$)};

\draw [blue] plot [smooth] coordinates {($(e1p)+(-225:0.08cm)$) (e1p) 
($(230:0.96cm)+(0.75cm,0)$) ($(250:0.96cm)+(0.75cm,0)$) ($(270:0.96cm)+(0.75cm,0)$) ($(290:0.99cm)+(0.75cm,0)$) ($(310:1.07cm)+(0.75cm,0)$) ($(330:1.07cm)+(0.75cm,0)$) 
(w8) (w5) (w3) (w9) 
($($0.5*(w9)+0.5*(w6)$)+(-0.2cm,0.1cm)$)
(w6) (w7) 
($($0.5*(w7)+0.5*(w5)$)+(0.1cm,0.0cm)$)
(w5) (w2) 
($($0.5*(w2)+0.5*(w1)$)+(0.24cm,0.135cm)$)
(w1) 
($($0.5*(w1)+0.5*(e1b)$)+(-0.1cm,0.08cm)$)
($($0.5*(w1)+0.5*(e1b)$)+(-0.25cm,-0.1cm)$)
(e1b) ($(e1b)+(-45:0.08cm)$)};


\node at ($($(0,0)!1!(120:0.55cm)$)+(0.015cm,0.03cm)$) {$e'_-$};
\node at ($($(0,0)!1!(-120:0.55cm)$)+(0cm,0cm)$) {$e'_+$};

\node at ($($(0,0)!1!(60:0.55cm)$)+(0.19cm,-0.2cm)$) {$e''_+$};
\node at ($($(0,0)!1!(-60:0.55cm)$)+(0.19cm,0.2cm)$) {$e''_-$};


\node at (-0.05cm,0.2cm) {$v'$};

\draw[fill=black] (1.5cm,0) circle (0.9pt);
\draw[fill=black] (1.1cm,0) circle (0.9pt);
\draw[fill=black] (1.9cm,0) circle (0.9pt);

\draw[fill=black] (1.38cm,0.4cm) circle (0.9pt);

    \draw (1.1cm,0) -- (1.5cm,0) -- (1.9cm,0);
    \draw (1.1cm,0) -- (1.38cm,0.4cm) -- (1.9cm,0);

\draw plot [smooth] coordinates {(0,0) ($(195:0.72cm)+(0.75cm,0)$) ($(210:0.7cm)+(0.75cm,0)$) ($(240:0.68cm)+(0.75cm,0)$) ($(270:0.65cm)+(0.75cm,0)$) ($(300:0.55cm)+(0.75cm,0)$) ($(330:0.45cm)+(0.75cm,0)$) (1.1cm,0)};
\draw plot [smooth] coordinates {(0,0) ($(195:0.77cm)+(0.75cm,0)$) ($(210:0.8cm)+(0.75cm,0)$) ($(225:0.81cm)+(0.75cm,0)$) ($(240:0.82cm)+(0.75cm,0)$) ($(255:0.835cm)+(0.75cm,0)$) ($(270:0.85cm)+(0.75cm,0)$) ($(285:0.9cm)+(0.75cm,0)$) ($(300:0.95cm)+(0.75cm,0)$) ($(315:1.00cm)+(0.75cm,0)$) ($(330:1.02cm)+(0.75cm,0)$) (1.9cm,0)};

\end{scope}


\node at (0cm,-1.5cm) {$(a)$};

\node at (4cm,-1.5cm) {$(b)$};
\end{tikzpicture}

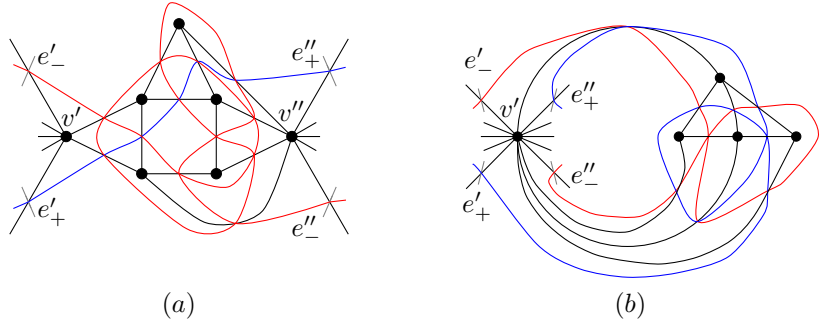
\captionof{figure}{Two types of expansion}
\end{center}

\noindent This operation will be called the {\it expansion} of $e$.
It replaces the mentioned above parts of central circuits by 
$$\dots,e'_+,E_+,e''_+,\dots\hspace{1cm}\text{ and }\hspace{1cm}\dots,e'_-,E_-,e''_-,\dots$$
(respectively), where $E_+, E_-$ are two intersecting trails in $G$ and each $E_\delta$ joins $e'_\delta$ and $e''_\delta$ for $\delta\in\{+,-\}$ (they are marked in blue and red in Fig. 9). 
Thus, central circuits do not change in a significant way. 

Now, we explain how transform $\mathcal{R}_b(G), \mathcal{R}_{w}(G)$ to simple graphs if at least one of the possibilities (A)--(D) is realized.  
Without loss of generality we can consider $\mathcal{R}_b(G)$. 
Furthermore, we restrict ourselves to the cases (A) and (B) (since (C) and (D) correspond to (A) and (B), respectively, in the dual graph). 
The case (A) will be decomposed in two subcases. 

(A1). Suppose that $\mathcal{R}_{b}(G)$ contains a face whose boundary is a loop.
The corresponding parts of mutually reversed central circuits from $G$ are also loops. 
The loops can be removed from these graphs without changing the central circuits in a significant way (see Fig. 10).  
\begin{center}
\begin{tikzpicture}[scale=1.2]
\begin{scope}
\draw[fill=black] (0,0) circle (1.7pt);

    \draw (0,-1.5cm) -- (0,1.5cm);
    \draw (0,0) -- (120:0.5cm);
    \draw (0,0) -- (150:0.5cm);
    \draw (0,0) -- (180:0.5cm);
    \draw (0,0) -- (210:0.5cm);
    \draw (0,0) -- (240:0.5cm);

\draw (1cm,0) circle (1cm);

    \draw[color=gray] (100:0.55cm) -- ($(100:0.55cm)!2!(0,0.75cm)$);

    \draw[color=gray] (260:0.55cm) -- ($(260:0.55cm)!2!(0,-0.75cm)$);

\draw [red] plot [smooth] coordinates {(0,0.75cm) ($(90:1.25cm)+(1cm,0)$) ($(45:1.25cm)+(1cm,0)$) (2cm,0) 
($(-45:0.3cm)+(1.5cm,0)$) ($(-90:0.25cm)+(1.5cm,0)$) ($(-135:0.3cm)+(1.5cm,0)$)
 (1.1cm,0) 
($(135:0.3cm)+(1.5cm,0)$) ($(90:0.25cm)+(1.5cm,0)$) ($(45:0.3cm)+(1.5cm,0)$) 
(2cm,0) ($(-45:1.25cm)+(1cm,0)$) ($(-90:1.25cm)+(1cm,0)$) (0,-0.75cm)};

    \draw[color=red] (0,-0.75cm) -- ($($(-90:1.25cm)+(1cm,0)$)!1.2!(0,-0.75cm)$);
    \draw[color=red] (0,0.75cm) -- ($($(90:1.25cm)+(1cm,0)$)!1.2!(0,0.75cm)$);

\end{scope}

\begin{scope}[xshift=3cm]
    \draw[->] (-0.5cm,0) -- (1cm,0);
\end{scope}

\begin{scope}[xshift=5cm]
\draw[fill=black] (0,0) circle (1.7pt);

    \draw (0,-1.5cm) -- (0,1.5cm);
    \draw (0,0) -- (120:0.5cm);
    \draw (0,0) -- (150:0.5cm);
    \draw (0,0) -- (180:0.5cm);
    \draw (0,0) -- (210:0.5cm);
    \draw (0,0) -- (240:0.5cm);


    \draw[color=gray] (100:0.55cm) -- ($(100:0.55cm)!2!(0,0.75cm)$);

    \draw[color=gray] (260:0.55cm) -- ($(260:0.55cm)!2!(0,-0.75cm)$);

\draw [red] plot [smooth] coordinates {(0,0.75cm) ($(90:1cm)+(1cm,0)$) ($(45:1cm)+(1cm,0)$) 
(2cm,0) ($(-45:1cm)+(1cm,0)$) ($(-90:1cm)+(1cm,0)$) (0,-0.75cm)};

    \draw[color=red] (0,-0.75cm) -- ($($(-90:1.25cm)+(1cm,0)$)!1.2!(0,-0.75cm)$);
    \draw[color=red] (0,0.75cm) -- ($($(90:1.25cm)+(1cm,0)$)!1.2!(0,0.75cm)$);

\end{scope}

\end{tikzpicture}

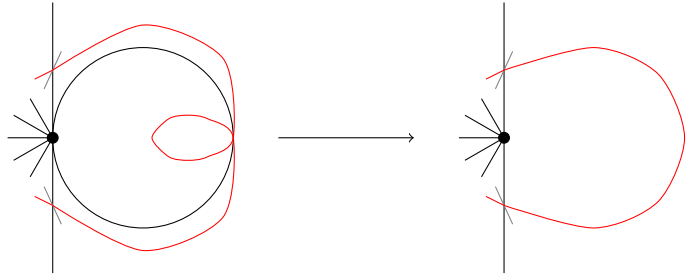
\captionof{figure}{Removing a loop}
\end{center}

(A2). If $e$ is a loop in $\mathcal{R}_b(G)$ which is not a boundary of a face, then we use the expansion to $e$. 

(B). If two distinct vertices are connected by $m\geq 2$ edges, then we expand any $m-1$ of them. 

\begin{exmp}\label{ex5}{\rm 
Since $\mathcal{R}_w(G)$ from Example \ref{ex4} contains two edges connecting the same pair of vertices (the case (C)), we expand one of these edges, see Fig. 11. 
\begin{center}
\begin{tikzpicture}[scale=0.6]
\draw [color=black, decoration={markings,
mark=at position 1 with {\arrow[scale=1.5,>=stealth]{>}}},
postaction={decorate},xshift=6cm] (-7.5cm,0) -- (-4.3cm,0);
\begin{scope}[xshift=-6cm]
    \coordinate (P1) at (60:3cm);
    \coordinate (P2) at (0:3cm);
    \coordinate (P3) at (300:3cm);
    \coordinate (P4) at (240:3cm);
    \coordinate (P5) at (180:3cm);
    \coordinate (P6) at (120:3cm);

    \coordinate (A12) at (30:2.25cm);
    \coordinate (A23) at (330:2.25cm);
    \coordinate (A34) at (270:2.25cm);
    \coordinate (A45) at (210:2.25cm);
    \coordinate (A56) at (150:2.25cm);
    \coordinate (A61) at (90:2.25cm);

    \coordinate (L1) at ($(P1)!1.5!(A12)$);
    \coordinate (L2) at ($(P2)!1.5!(A23)$);
    \coordinate (L3) at ($(P3)!1.5!(A34)$);
    \coordinate (L4) at ($(P4)!1.5!(A45)$);
    \coordinate (L5) at ($(P5)!1.5!(A56)$);
    \coordinate (L6) at ($(P6)!1.5!(A61)$);

    \coordinate (R1) at ($(P1)!1.5!(A61)$);
    \coordinate (R2) at ($(P2)!1.5!(A12)$);
    \coordinate (R3) at ($(P3)!1.5!(A23)$);
    \coordinate (R4) at ($(P4)!1.5!(A34)$);
    \coordinate (R5) at ($(P5)!1.5!(A45)$);
    \coordinate (R6) at ($(P6)!1.5!(A56)$);

    \coordinate (Q1) at (-0.925cm,-1.61cm);
    \coordinate (Q2) at (0.795cm,-1.235cm);
    \coordinate (Q3) at (0.660cm,-0.47cm);
    \coordinate (Q4) at (0.286cm,-0.601cm);
    \coordinate (Q5) at (0.905cm,0.72cm);

    \coordinate (w1) at (barycentric cs:P1=1,L6=1,R2=1);
    \coordinate (w2) at (barycentric cs:R3=1,P2=1,L1=1);
    \coordinate (w3) at (barycentric cs:P3=1,R4=1,L2=1);
    \coordinate (w4) at (barycentric cs:P4=1,R5=1,L3=1);
    \coordinate (w5) at (barycentric cs:P5=1,R6=1,L4=1);
    \coordinate (w6) at (barycentric cs:P6=1,R1=1,L5=1);
    \coordinate (w7) at (barycentric cs:Q2=1,Q3=1,Q4=1);
    \coordinate (w8) at ($(P2)+(0.7cm,0)$);

\draw[fill=black, color=black] (w1) circle (1.75pt);
\draw[fill=black, color=black] (w2) circle (1.75pt);
\draw[fill=black, color=black] (w3) circle (1.75pt);
\draw[fill=black, color=black] (w4) circle (1.75pt);
\draw[fill=black, color=black] (w5) circle (1.75pt);
\draw[fill=black, color=black] (w6) circle (1.75pt);
\draw[fill=black, color=black] (w7) circle (1.75pt);
\draw[fill=black, color=black] (w8) circle (1.75pt);

    \draw[color=black] (w1) -- (w2);
    \draw[color=black] (w2) -- (w3);
    \draw[color=black] (w3) -- (w4);
    \draw[color=black] (w4) -- (w5);
    \draw[color=black] (w5) -- (w6);
    \draw[color=black] (w6) -- (w1);
    \draw[color=black] (w4) -- (w6);
    \draw[color=black] (w7) -- (w6);
    \draw[color=black] (w7) -- (w2);
    \draw[color=black] (w7) -- (w3);

\draw [black] plot [smooth] coordinates {(w2) ($(w2)+(-0.18cm,0.1cm)$) ($(0,0)!0.7!($0.5*(w1)+0.5*(w2)$)$) ($(w1)+(-0.08cm,-0.17cm)$)
 (w1)};

    \draw[color=black] (w8) -- (w2);
\draw [black] plot [smooth] coordinates {(w8) ($(0,0)!1.25!(A12)$)  (w1)};
\draw [black] plot [smooth] coordinates {(w8) ($(0,0)!1.25!(A23)$)  (w3)};
\draw [black] plot [smooth] coordinates {(w8) ($(0,0)!1.5!(A12)$)  ($(0,0)!1.1!(P1)$) ($(0,0)!1.3!(A61)$) (w6)};
\draw [black] plot [smooth] coordinates {(w8) ($(0,0)!1.5!(A23)$)  ($(0,0)!1.1!(P3)$) ($(0,0)!1.3!(A34)$) (w4)};
\draw [black] plot [smooth] coordinates {(w8) ($(0,0)!1.8!(A23)$)  ($(0,0)!1.3!(P3)$) ($(0,0)!1.5!(A34)$) ($(0,0)!1.1!(P4)$) ($(0,0)!1.5!(A45)$) (w5)};
\end{scope}
\begin{scope}[xshift=5cm]
    \coordinate (P1) at (60:3cm);
    \coordinate (P2) at (0:3cm);
    \coordinate (P3) at (300:3cm);
    \coordinate (P4) at (240:3cm);
    \coordinate (P5) at (180:3cm);
    \coordinate (P6) at (120:3cm);

    \coordinate (A12) at (30:2.25cm);
    \coordinate (A23) at (330:2.25cm);
    \coordinate (A34) at (270:2.25cm);
    \coordinate (A45) at (210:2.25cm);
    \coordinate (A56) at (150:2.25cm);
    \coordinate (A61) at (90:2.25cm);

    \coordinate (L1) at ($(P1)!1.5!(A12)$);
    \coordinate (L2) at ($(P2)!1.5!(A23)$);
    \coordinate (L3) at ($(P3)!1.5!(A34)$);
    \coordinate (L4) at ($(P4)!1.5!(A45)$);
    \coordinate (L5) at ($(P5)!1.5!(A56)$);
    \coordinate (L6) at ($(P6)!1.5!(A61)$);

    \coordinate (R1) at ($(P1)!1.5!(A61)$);
    \coordinate (R2) at ($(P2)!1.5!(A12)$);
    \coordinate (R3) at ($(P3)!1.5!(A23)$);
    \coordinate (R4) at ($(P4)!1.5!(A34)$);
    \coordinate (R5) at ($(P5)!1.5!(A45)$);
    \coordinate (R6) at ($(P6)!1.5!(A56)$);


    \coordinate (Q1) at (-0.925cm,-1.61cm);
    \coordinate (Q2) at (0.795cm,-1.235cm);
    \coordinate (Q3) at (0.660cm,-0.47cm);
    \coordinate (Q4) at (0.286cm,-0.601cm);
    \coordinate (Q5) at (0.905cm,0.72cm);

    \coordinate (w1) at (barycentric cs:P1=1,L6=1,R2=1);
    \coordinate (w2) at (barycentric cs:R3=1,P2=1,L1=1);
    \coordinate (w3) at (barycentric cs:P3=1,R4=1,L2=1);
    \coordinate (w4) at (barycentric cs:P4=1,R5=1,L3=1);
    \coordinate (w5) at (barycentric cs:P5=1,R6=1,L4=1);
    \coordinate (w6) at (barycentric cs:P6=1,R1=1,L5=1);
    \coordinate (w7) at (barycentric cs:Q2=1,Q3=1,Q4=1);
    \coordinate (w8) at ($(P2)+(0.7cm,0)$);

\draw[fill=black, color=black] (w1) circle (1.75pt);
\draw[fill=black, color=black] (w2) circle (1.75pt);
\draw[fill=black, color=black] (w3) circle (1.75pt);
\draw[fill=black, color=black] (w4) circle (1.75pt);
\draw[fill=black, color=black] (w5) circle (1.75pt);
\draw[fill=black, color=black] (w6) circle (1.75pt);
\draw[fill=black, color=black] (w7) circle (1.75pt);
\draw[fill=black, color=black] (w8) circle (1.75pt);

    \draw[color=black] (w1) -- (w2);
    \draw[color=black] (w2) -- (w3);
    \draw[color=black] (w3) -- (w4);
    \draw[color=black] (w4) -- (w5);
    \draw[color=black] (w5) -- (w6);
    \draw[color=black] (w6) -- (w1);
    \draw[color=black] (w4) -- (w6);
    \draw[color=black] (w7) -- (w6);
    \draw[color=black] (w7) -- (w2);
    \draw[color=black] (w7) -- (w3);


    \draw[color=black] (w8) -- (w2);
\draw [black] plot [smooth] coordinates {(w8) ($(0,0)!1.25!(A12)$)  (w1)};
\draw [black] plot [smooth] coordinates {(w8) ($(0,0)!1.25!(A23)$)  (w3)};
\draw [black] plot [smooth] coordinates {(w8) ($(0,0)!1.5!(A12)$)  ($(0,0)!1.1!(P1)$) ($(0,0)!1.3!(A61)$) (w6)};
\draw [black] plot [smooth] coordinates {(w8) ($(0,0)!1.5!(A23)$)  ($(0,0)!1.1!(P3)$) ($(0,0)!1.3!(A34)$) (w4)};
\draw [black] plot [smooth] coordinates {(w8) ($(0,0)!1.8!(A23)$)  ($(0,0)!1.3!(P3)$) ($(0,0)!1.5!(A34)$) ($(0,0)!1.1!(P4)$) ($(0,0)!1.5!(A45)$) (w5)};

    \coordinate (z1) at (barycentric cs:w1=0.6,w6=1.6,w2=0.6,w7=1.8);
\draw[fill=black, color=black] (z1) circle (1.75pt);
    \draw[color=black] (z1) -- (w1);
    \coordinate (z2) at (barycentric cs:w1=0.8,w2=0.7,z1=1.8);
\draw[fill=black, color=black] (z2) circle (1.75pt);

    \coordinate (z3) at (barycentric cs:w1=0.1,w2=1.4,z1=2.3);
\draw[fill=black, color=black] (z3) circle (1.75pt);

    \coordinate (z4) at (barycentric cs:z2=1.8,w1=1,w2=1);
\draw[fill=black, color=black] (z4) circle (1.75pt);

    \coordinate (z5) at (barycentric cs:z3=1.8,w1=1,w2=1.9);
\draw[fill=black, color=black] (z5) circle (1.75pt);

    \draw[color=black] (z1) -- (z2);
    \draw[color=black] (z2) -- (w1);
    \draw[color=black] (z2) -- (z3);
    \draw[color=black] (z1) -- (z3);
    \draw[color=black] (w2) -- (z3);

    \draw[color=black] (z4) -- (w1);
    \draw[color=black] (z4) -- (z2);
    \draw[color=black] (z4) -- (z5);
    \draw[color=black] (z5) -- (z3);
    \draw[color=black] (z5) -- (w2);
\end{scope}
\end{tikzpicture}

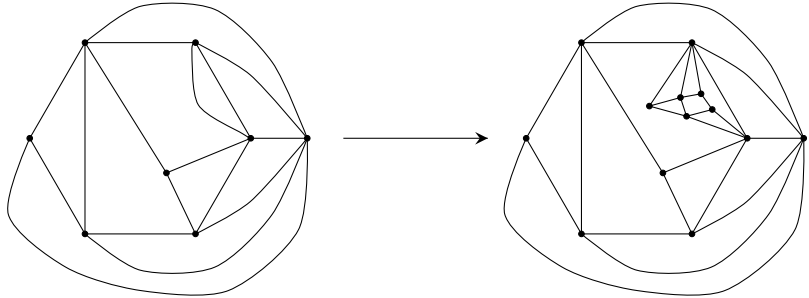
\captionof{figure}{The expansion of an edge in $\mathcal{R}_w(G)$}
\end{center}
This simultaneously modify $\mathcal{R}_b(G)$ and we obtain a graph without the possibility (A), see Fig. 12. 
\begin{center}
\begin{tikzpicture}[scale=0.6]
\draw [color=black, decoration={markings,
mark=at position 1 with {\arrow[scale=1.5,>=stealth]{>}}},
postaction={decorate}] (-8cm,0) -- (-3cm,0);
\begin{scope}[xshift=-11cm]
    \coordinate (P1) at (60:3cm);
    \coordinate (P2) at (0:3cm);
    \coordinate (P3) at (300:3cm);
    \coordinate (P4) at (240:3cm);
    \coordinate (P5) at (180:3cm);
    \coordinate (P6) at (120:3cm);

    \coordinate (A12) at (30:2.25cm);
    \coordinate (A23) at (330:2.25cm);
    \coordinate (A34) at (270:2.25cm);
    \coordinate (A45) at (210:2.25cm);
    \coordinate (A56) at (150:2.25cm);
    \coordinate (A61) at (90:2.25cm);

    \coordinate (L1) at ($(P1)!1.5!(A12)$);
    \coordinate (L2) at ($(P2)!1.5!(A23)$);
    \coordinate (L3) at ($(P3)!1.5!(A34)$);
    \coordinate (L4) at ($(P4)!1.5!(A45)$);
    \coordinate (L5) at ($(P5)!1.5!(A56)$);
    \coordinate (L6) at ($(P6)!1.5!(A61)$);

    \coordinate (R1) at ($(P1)!1.5!(A61)$);
    \coordinate (R2) at ($(P2)!1.5!(A12)$);
    \coordinate (R3) at ($(P3)!1.5!(A23)$);
    \coordinate (R4) at ($(P4)!1.5!(A34)$);
    \coordinate (R5) at ($(P5)!1.5!(A45)$);
    \coordinate (R6) at ($(P6)!1.5!(A56)$);

    \coordinate (Q1) at (-0.925cm,-1.61cm);
    \coordinate (Q2) at (0.795cm,-1.235cm);
    \coordinate (Q3) at (0.660cm,-0.47cm);
    \coordinate (Q4) at (0.286cm,-0.601cm);
    \coordinate (Q5) at (0.905cm,0.72cm);

    \coordinate (v1) at (barycentric cs:P1=1,P2=1,A12=1);
    \coordinate (v2) at (barycentric cs:P2=1,P3=1,A23=1);
    \coordinate (v3) at (barycentric cs:P3=1,P4=1,A34=1);
    \coordinate (v4) at (barycentric cs:P4=1,P5=1,A45=1);
    \coordinate (v5) at (barycentric cs:P5=1,P6=1,A56=1);
    \coordinate (v6) at (barycentric cs:P6=1,P1=1,A61=1);
    \coordinate (v7) at (barycentric cs:R1=1,L6=1,A61=1,Q3=1,Q4=1,Q5=1);
    \coordinate (v8) at (barycentric cs:R2=1,L1=1,A12=1,Q5=1);
    \coordinate (v9) at (barycentric cs:L5=1,A56=1,R6=1,L4=1,A45=1,R5=1,Q1=1);
    \coordinate (v10) at (barycentric cs:L3=1,A34=1,R4=1,Q1=1,Q2=1,Q4=1);
    \coordinate (v11) at (barycentric cs:L2=1,A23=1,R3=1,Q2=1,Q3=1);

\draw[fill=black, color=black] (v1) circle (1.75pt);
\draw[fill=black, color=black] (v2) circle (1.75pt);
\draw[fill=black, color=black] (v3) circle (1.75pt);
\draw[fill=black, color=black] (v4) circle (1.75pt);
\draw[fill=black, color=black] (v5) circle (1.75pt);
\draw[fill=black, color=black] (v6) circle (1.75pt);
\draw[fill=black, color=black] (v7) circle (1.75pt);
\draw[fill=black, color=black] (v8) circle (1.75pt);
\draw[fill=black, color=black] (v9) circle (1.75pt);
\draw[fill=black, color=black] (v10) circle (1.75pt);
\draw[fill=black, color=black] (v11) circle (1.75pt);

    \draw[color=black] (v1) -- (v2);
    \draw[color=black] (v2) -- (v3);
    \draw[color=black] (v3) -- (v4);
    \draw[color=black] (v4) -- (v5);
    \draw[color=black] (v5) -- (v6);
    \draw[color=black] (v6) -- (v1);
    \draw[color=black] (v1) -- (v8);
    \draw[color=black] (v8) -- (v7);
    \draw[color=black] (v6) -- (v7);
    \draw[color=black] (v10) -- (v7);
    \draw[color=black] (v11) -- (v7);
    \draw[color=black] (v11) -- (v10);
    \draw[color=black] (v11) -- (v2);
    \draw[color=black] (v10) -- (v3);
    \draw[color=black] (v9) -- (v5);
    \draw[color=black] (v9) -- (v4);
    \draw[color=black] (v9) -- (v10);
\end{scope}
\begin{scope}
    \coordinate (P1) at (60:3cm);
    \coordinate (P2) at (0:3cm);
    \coordinate (P3) at (300:3cm);
    \coordinate (P4) at (240:3cm);
    \coordinate (P5) at (180:3cm);
    \coordinate (P6) at (120:3cm);

    \coordinate (A12) at (30:2.25cm);
    \coordinate (A23) at (330:2.25cm);
    \coordinate (A34) at (270:2.25cm);
    \coordinate (A45) at (210:2.25cm);
    \coordinate (A56) at (150:2.25cm);
    \coordinate (A61) at (90:2.25cm);

    \coordinate (L1) at ($(P1)!1.5!(A12)$);
    \coordinate (L2) at ($(P2)!1.5!(A23)$);
    \coordinate (L3) at ($(P3)!1.5!(A34)$);
    \coordinate (L4) at ($(P4)!1.5!(A45)$);
    \coordinate (L5) at ($(P5)!1.5!(A56)$);
    \coordinate (L6) at ($(P6)!1.5!(A61)$);

    \coordinate (R1) at ($(P1)!1.5!(A61)$);
    \coordinate (R2) at ($(P2)!1.5!(A12)$);
    \coordinate (R3) at ($(P3)!1.5!(A23)$);
    \coordinate (R4) at ($(P4)!1.5!(A34)$);
    \coordinate (R5) at ($(P5)!1.5!(A45)$);
    \coordinate (R6) at ($(P6)!1.5!(A56)$);


    \coordinate (Q1) at (-0.925cm,-1.61cm);
    \coordinate (Q2) at (0.795cm,-1.235cm);
    \coordinate (Q3) at (0.660cm,-0.47cm);
    \coordinate (Q4) at (0.286cm,-0.601cm);
    \coordinate (Q5) at (0.905cm,0.72cm);

    \coordinate (v1) at (barycentric cs:P1=1,P2=1,A12=1);
    \coordinate (v2) at (barycentric cs:P2=1,P3=1,A23=1);
    \coordinate (v3) at (barycentric cs:P3=1,P4=1,A34=1);
    \coordinate (v4) at (barycentric cs:P4=1,P5=1,A45=1);
    \coordinate (v5) at (barycentric cs:P5=1,P6=1,A56=1);
    \coordinate (v6) at (barycentric cs:P6=1,P1=1,A61=1);
    \coordinate (v7) at (barycentric cs:R1=1,L6=1,A61=1,Q3=1,Q4=1,Q5=1);
    \coordinate (v8) at (barycentric cs:R2=1,L1=1,A12=1,Q5=1);
    \coordinate (v9) at (barycentric cs:L5=1,A56=1,R6=1,L4=1,A45=1,R5=1,Q1=1);
    \coordinate (v10) at (barycentric cs:L3=1,A34=1,R4=1,Q1=1,Q2=1,Q4=1);
    \coordinate (v11) at (barycentric cs:L2=1,A23=1,R3=1,Q2=1,Q3=1);

\draw[fill=black, color=black] (v1) circle (1.75pt);
\draw[fill=black, color=black] (v2) circle (1.75pt);
\draw[fill=black, color=black] (v3) circle (1.75pt);
\draw[fill=black, color=black] (v4) circle (1.75pt);
\draw[fill=black, color=black] (v5) circle (1.75pt);
\draw[fill=black, color=black] (v6) circle (1.75pt);
\draw[fill=black, color=black] (v7) circle (1.75pt);
\draw[fill=black, color=black] (v8) circle (1.75pt);
\draw[fill=black, color=black] (v9) circle (1.75pt);
\draw[fill=black, color=black] (v10) circle (1.75pt);
\draw[fill=black, color=black] (v11) circle (1.75pt);

\draw (v1) -- (v2);
\draw (v2) -- (v3);
\draw (v3) -- (v4);
\draw (v4) -- (v5);
\draw (v5) -- (v6);
\draw (v6) -- (v1);
    \draw[color=black] (v6) -- (v1);
    \draw[color=black] (v1) -- (v8);
    \draw[color=black] (v6) -- (v7);
    \draw[color=black] (v10) -- (v7);
    \draw[color=black] (v11) -- (v7);
    \draw[color=black] (v11) -- (v10);
    \draw[color=black] (v11) -- (v2);
    \draw[color=black] (v10) -- (v3);
    \draw[ color=black] (v9) -- (v5);
    \draw[color=black] (v9) -- (v4);
    \draw[color=black] (v9) -- (v10);
    \coordinate (n1) at ($(v8)!0.33!(v7)$);
    \draw[color=black] (v8) -- (n1);
\draw[fill=black, color=black] (n1) circle (1.75pt);

    \coordinate (n2) at ($(v8)!0.66!(v7)$);
    \draw[color=black] (n1) -- (n2);
\draw[fill=black, color=black] (n2) circle (1.75pt);

    \draw[color=black] (v7) -- (n2);

    \coordinate (m1) at ($(n1)+(0,0.4cm)$);
    \draw[color=black] (n1) -- (m1);
\draw[fill=black, color=black] (m1) circle (1.75pt);

    \coordinate (m2) at ($(n2)+(0,0.4cm)$);
    \draw[color=black] (n2) -- (m2);
\draw[fill=black, color=black] (m2) circle (1.75pt);

    \draw[color=black] (m2) -- (m1);
    \draw[color=black] (m2) -- (v7);
    \draw[color=black] (m1) -- (v8);

    \coordinate (s) at ($(n1)+(0,-0.4cm)$);
    \draw[color=black] (n1) -- (s);
\draw[fill=black, color=black] (s) circle (1.75pt);
    \draw[color=black] (s) -- (v7);
    \draw[color=black] (s) -- (v8);
\end{scope}
\end{tikzpicture}

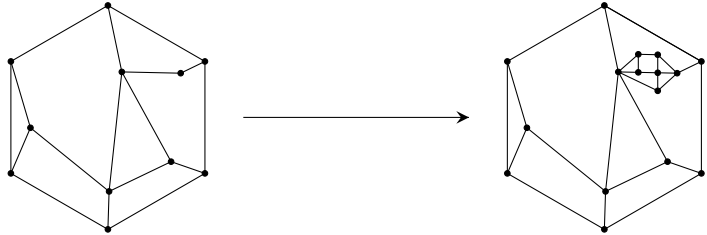
\captionof{figure}{The corresponding modification of $\mathcal{R}_b(G)$}
\end{center}
}\end{exmp}

So, we can simultaneously transform $\mathcal{R}_b(G)$ and $\mathcal{R}_w(G)$ to simple graphs such that the relation $\sim$ on elements of $\mathcal{O}$ is not changed. 
In particular, we come to a new ($4$-regular plane) graph $G$ and assert that $\mathcal{R}_b(G)$ contains a face for which $\sigma$ is realized as the $z$-monodromy of one of faces. 

Recall that $C$ is a cycle in $G$ with the vertices $p_1,\dots,p_k$ and it is the boundary of the outer face of $G$. 
This face has a common edge only with faces whose boundaries contain the vertices $p_i, p_j, a_{ij}$. 
By the definition of $\mathcal{R}_b(G)$, we have the following: 

$\bullet$ every face with the boundary containing $p_i, p_j, a_{ij}$ in $G$ is a vertex in $\mathcal{R}_b(G)$ which we denote by $v_{ij}$;

$\bullet$ the face bounded by $C$ in $G$ is a face in $\mathcal{R}_b(G)$ which will be denoted by $F$;

$\bullet$ every $p_i$ corresponds to an edge of $F$.

\noindent Consider the oriented edges $e_j=v_{ij}v_{jl}$ and $-e_j=v_{jl}v_{ij}$ in $\mathcal{R}_b(G)$, where $i,j,l$ are three consecutive elements in the cyclic sequence $1,\dots,k$. 
The pair of mutually reversed oriented edges $e_j, -e_j$ corresponds to the vertex $p_j$ in $G$. 
Thus, 
$$\Omega(F)=\{e_1,\dots,e_k,-e_k,\dots,-e_1\}.$$
Let $e_0,e\in\Omega(F)$ be such that $D_F(e_0)=e$. 
There is a unique zigzag $Z$ in $\mathcal{R}_b(G)$ containing the pair $e_0,e$. 
The element $e'$ which occurs in $Z$ directly after this pair does not belong to $\Omega(F)$. 
The edges $e_0,e,e'$ are three consecutive vertices in the central circuit in $G$ corresponding to $Z$ such that $e_0,e$ are two consecutive vertices from the cycle $C$ and $e'$ is one of elements $a_{ij}$. 
Let $x$ be the first element from $\mathcal{O}$ such that the central circuit containing $e_0,e,e'$ passes through $x$ (as a curve on the plane) directly after this triple ($x$ is a point on the plane, but not a vertex of the graph). 
There is a unique $x'\in\mathcal{O}$ such that $x\sim x'$ and the central circuit passes through $x'$. 
Since there is no elements of $\Omega(F)$ between $x$ and $x'$ in the central circuit, the first element of $\Omega(F)$ that occurs after $x'$ corresponds to $M_F(e)$. 
Therefore, $\sigma$ realizes as $M_F$. 

\begin{exmp}{\rm 
Let $F$ be the outer face of $\mathcal{R}_b(G)$ from Example \ref{ex5} and let $e_i$ be the oriented edge of $F$ corresponding to $p_i$ whose direction is defined by the clockwise orientation on the boundary of $F$ (see Fig. 13). 
\begin{center}
\begin{tikzpicture}[scale=0.9]
    \coordinate (P1) at (60:3cm);
    \coordinate (P2) at (0:3cm);
    \coordinate (P3) at (300:3cm);
    \coordinate (P4) at (240:3cm);
    \coordinate (P5) at (180:3cm);
    \coordinate (P6) at (120:3cm);

    \coordinate (A12) at (30:2.25cm);
    \coordinate (A23) at (330:2.25cm);
    \coordinate (A34) at (270:2.25cm);
    \coordinate (A45) at (210:2.25cm);
    \coordinate (A56) at (150:2.25cm);
    \coordinate (A61) at (90:2.25cm);

    \coordinate (L1) at ($(P1)!1.5!(A12)$);
    \coordinate (L2) at ($(P2)!1.5!(A23)$);
    \coordinate (L3) at ($(P3)!1.5!(A34)$);
    \coordinate (L4) at ($(P4)!1.5!(A45)$);
    \coordinate (L5) at ($(P5)!1.5!(A56)$);
    \coordinate (L6) at ($(P6)!1.5!(A61)$);

    \coordinate (R1) at ($(P1)!1.5!(A61)$);
    \coordinate (R2) at ($(P2)!1.5!(A12)$);
    \coordinate (R3) at ($(P3)!1.5!(A23)$);
    \coordinate (R4) at ($(P4)!1.5!(A34)$);
    \coordinate (R5) at ($(P5)!1.5!(A45)$);
    \coordinate (R6) at ($(P6)!1.5!(A56)$);


    \coordinate (Q1) at (-0.925cm,-1.61cm);
    \coordinate (Q2) at (0.795cm,-1.235cm);
    \coordinate (Q3) at (0.660cm,-0.47cm);
    \coordinate (Q4) at (0.286cm,-0.601cm);
    \coordinate (Q5) at (0.905cm,0.72cm);

    \coordinate (v1) at (barycentric cs:P1=1,P2=1,A12=1);
    \coordinate (v2) at (barycentric cs:P2=1,P3=1,A23=1);
    \coordinate (v3) at (barycentric cs:P3=1,P4=1,A34=1);
    \coordinate (v4) at (barycentric cs:P4=1,P5=1,A45=1);
    \coordinate (v5) at (barycentric cs:P5=1,P6=1,A56=1);
    \coordinate (v6) at (barycentric cs:P6=1,P1=1,A61=1);
    \coordinate (v7) at (barycentric cs:R1=1,L6=1,A61=1,Q3=1,Q4=1,Q5=1);
    \coordinate (v8) at (barycentric cs:R2=1,L1=1,A12=1,Q5=1);
    \coordinate (v9) at (barycentric cs:L5=1,A56=1,R6=1,L4=1,A45=1,R5=1,Q1=1);
    \coordinate (v10) at (barycentric cs:L3=1,A34=1,R4=1,Q1=1,Q2=1,Q4=1);
    \coordinate (v11) at (barycentric cs:L2=1,A23=1,R3=1,Q2=1,Q3=1);

\draw[fill=black, color=black] (v1) circle (1.5pt);
\draw[fill=black, color=black] (v2) circle (1.5pt);
\draw[fill=black, color=black] (v3) circle (1.5pt);
\draw[fill=black, color=black] (v4) circle (1.5pt);
\draw[fill=black, color=black] (v5) circle (1.5pt);
\draw[fill=black, color=black] (v6) circle (1.5pt);
\draw[fill=black, color=black] (v7) circle (1.5pt);
\draw[fill=black, color=black] (v8) circle (1.5pt);
\draw[fill=black, color=black] (v9) circle (1.5pt);
\draw[fill=black, color=black] (v10) circle (1.5pt);
\draw[fill=black, color=black] (v11) circle (1.5pt);

\draw [color=black, decoration={markings,
mark=at position 0.56 with {\arrow[scale=1.5,>=stealth]{>}}},
postaction={decorate}] (v1) -- (v2);
\draw [color=black, decoration={markings,
mark=at position 0.56 with {\arrow[scale=1.5,>=stealth]{>}}},
postaction={decorate}] (v2) -- (v3);
\draw [color=black, decoration={markings,
mark=at position 0.56 with {\arrow[scale=1.5,>=stealth]{>}}},
postaction={decorate}] (v3) -- (v4);
\draw [color=black, decoration={markings,
mark=at position 0.56 with {\arrow[scale=1.5,>=stealth]{>}}},
postaction={decorate}] (v4) -- (v5);
\draw [color=black, decoration={markings,
mark=at position 0.56 with {\arrow[scale=1.5,>=stealth]{>}}},
postaction={decorate}] (v5) -- (v6);
\draw [color=black, decoration={markings,
mark=at position 0.56 with {\arrow[scale=1.5,>=stealth]{>}}},
postaction={decorate}] (v6) -- (v1);
    \draw[color=black] (v6) -- (v1);
    \draw[color=black] (v1) -- (v8);
    \draw[color=black] (v6) -- (v7);
    \draw[color=black] (v10) -- (v7);
    \draw[color=black] (v11) -- (v7);
    \draw[color=black] (v11) -- (v10);
    \draw[color=black] (v11) -- (v2);
    \draw[color=black] (v10) -- (v3);
    \draw[ color=black] (v9) -- (v5);
    \draw[color=black] (v9) -- (v4);
    \draw[color=black] (v9) -- (v10);
    \coordinate (n1) at ($(v8)!0.33!(v7)$);
    \draw[color=black] (v8) -- (n1);
\draw[fill=black, color=black] (n1) circle (1.5pt);

    \coordinate (n2) at ($(v8)!0.66!(v7)$);
    \draw[color=black] (n1) -- (n2);
\draw[fill=black, color=black] (n2) circle (1.5pt);

    \draw[color=black] (v7) -- (n2);

    \coordinate (m1) at ($(n1)+(0,0.4cm)$);
    \draw[color=black] (n1) -- (m1);
\draw[fill=black, color=black] (m1) circle (1.5pt);

    \coordinate (m2) at ($(n2)+(0,0.4cm)$);
    \draw[color=black] (n2) -- (m2);
\draw[fill=black, color=black] (m2) circle (1.5pt);

    \draw[color=black] (m2) -- (m1);
    \draw[color=black] (m2) -- (v7);
    \draw[color=black] (m1) -- (v8);

    \coordinate (s) at ($(n1)+(0,-0.4cm)$);
    \draw[color=black] (n1) -- (s);
\draw[fill=black, color=black] (s) circle (1.5pt);
    \draw[color=black] (s) -- (v7);
    \draw[color=black] (s) -- (v8);

\node[color=black] at (60:2.5cm) {$e_1$};
\node[color=black] at (0:2.5cm) {$e_2$};
\node[color=black] at (300:2.5cm) {$e_3$};
\node[color=black] at (240:2.5cm) {$e_4$};
\node[color=black] at (180:2.5cm) {$e_5$};
\node[color=black] at (120:2.5cm) {$e_6$};

\end{tikzpicture}

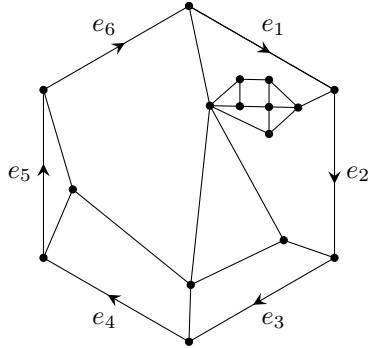
\captionof{figure}{The new graph $\mathcal{R}_b(G)$}
\end{center}
A direct verification shows that
$$M_F=(e_1,-e_6,-e_4,e_2)(e_3,-e_5)(e_5,-e_3)(-e_2,e_4,e_6,-e_1),$$
i.e. the permutation
$$\sigma=(1,-6,-4,2)(3,-5)(5,-3)(-2,4,6,-1)$$
from Example \ref{ex3} realizes as $M_F$ in $\mathcal{R}_b(G)$. 
}\end{exmp}

\section{The non-plane case}

In this section, we consider an arbitrary connected closed $2$-dimensional surface $S$ different from a sphere. 
We show that any permutation $\sigma$ on $[k]_{\pm}$ satisfying (M1) and (M2) realizes as the $z$-monodromy of $k$-gonal face in a graph embedded in $S$. 
Let $\Gamma$ be a graph embedded in a sphere (a plane graph) such that $\sigma$ realizes as the $z$-monodromy of a face $F$ of $\Gamma$. 
We assume that $\Gamma=\mathcal{R}_b(G)$, where $G$ is the $4$-regular graph from Section 4. 

Let $e$ be an edge in $G$.
It is contained in the boundaries of precisely two faces $F_1,F_2$ in $G$. 
We assume that $F_1$ and $F_2$ correspond to a face distinct from $F$ and a vertex of $\Gamma$, respectively. 
Let us take three circles $B_1, B_2, B_3$ that intersect like the Borromean rings. 
Consider the graph $G'$ obtained from $G$ by adding $B_1, B_2, B_3$ as in Fig. 14. 
\begin{center}
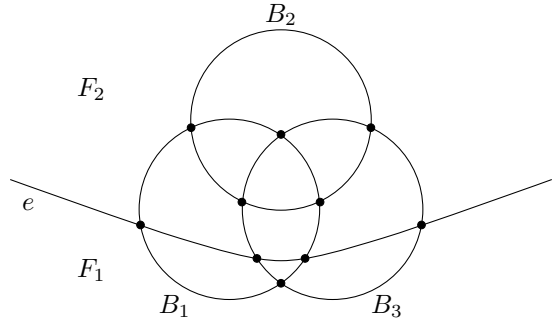

\begin{tikzpicture}[scale=1.2]


    \coordinate (B2) at (90:0.66cm);
    \coordinate (B1) at (210:0.66cm);
    \coordinate (B3) at (330:0.66cm);
\draw (B1) circle (1cm);
\draw (B2) circle (1cm);
\draw (B3) circle (1cm);

\draw[fill=black, color=black] (90:0.5cm) circle (1.2pt);
\draw[fill=black, color=black] (210:0.5cm) circle (1.2pt);
\draw[fill=black, color=black] (330:0.5cm) circle (1.2pt);

\draw[fill=black, color=black] (150:1.15cm) circle (1.2pt);
\draw[fill=black, color=black] (270:1.15cm) circle (1.2pt);
\draw[fill=black, color=black] (30:1.15cm) circle (1.2pt);

\node[color=black] at (90:1.83cm) {$B_2$};
\node[color=black] at (230:1.83cm) {$B_1$};
\node[color=black] at (310:1.83cm) {$B_3$};

\draw [black] plot [smooth] coordinates {(-3cm,0cm) (210:1.3cm) (270:0.9cm) (330:1.3cm) (3cm,0cm)};

\node[color=black] at (-2.8cm,-0.28cm) {$e$};

\draw[fill=black, color=black] ($(190:1cm)+(B1)$) circle (1.2pt);
\draw[fill=black, color=black] ($(213:1cm)+(B3)$) circle (1.2pt);
\draw[fill=black, color=black] ($(327:1cm)+(B1)$) circle (1.2pt);
\draw[fill=black, color=black] ($(350:1cm)+(B3)$) circle (1.2pt);

\node[color=black] at (-2.1cm,-1cm) {$F_1$};
\node[color=black] at (-2.1cm,1cm) {$F_2$};

\end{tikzpicture}
\captionof{figure}{Constructing of $G'$}
\end{center}
It must be pointed out that the circles $B_1, B_2, B_3$ do not intersect the remaining edges of $G$. 
The graph $G'$ is $4$-regular and $\mathcal{R}_b(G')$ is obtained from $\Gamma=\mathcal{R}_b(G)$ by adding the graph $\tilde{G}$ marked in red in Fig. 15 to the vertex $v$ corresponding to the face $F_2$. 
\begin{center}
\begin{tikzpicture}[scale=1.2]


    \coordinate (B2) at (90:0.66cm);
    \coordinate (B1) at (210:0.66cm);
    \coordinate (B3) at (330:0.66cm);
\draw (B1) circle (1cm);
\draw (B2) circle (1cm);
\draw (B3) circle (1cm);

\draw[fill=black, color=black] (90:0.5cm) circle (1.2pt);
\draw[fill=black, color=black] (210:0.5cm) circle (1.2pt);
\draw[fill=black, color=black] (330:0.5cm) circle (1.2pt);

\draw[fill=black, color=black] (150:1.15cm) circle (1.2pt);
\draw[fill=black, color=black] (270:1.15cm) circle (1.2pt);
\draw[fill=black, color=black] (30:1.15cm) circle (1.2pt);

\node[color=black] at (115:1.8cm) {$B_2$};
\node[color=black] at (230:1.83cm) {$B_1$};
\node[color=black] at (310:1.83cm) {$B_3$};

\draw [black] plot [smooth] coordinates {(-3cm,0cm) (210:1.3cm) (270:0.9cm) (330:1.3cm) (3cm,0cm)};

\node[color=black] at (-2.8cm,-0.28cm) {$e$};

\draw[fill=black, color=black] ($(190:1cm)+(B1)$) circle (1.2pt);
\draw[fill=black, color=black] ($(213:1cm)+(B3)$) circle (1.2pt);
\draw[fill=black, color=black] ($(327:1cm)+(B1)$) circle (1.2pt);
\draw[fill=black, color=black] ($(350:1cm)+(B3)$) circle (1.2pt);

    \coordinate (x1) at (90:2cm);
    \coordinate (x2) at (150:0.6cm);
\draw[fill=red, color=red] (x2) circle (1.75pt);
    \coordinate (x3) at (30:0.6cm);
\draw[fill=red, color=red] (x3) circle (1.75pt);

    \coordinate (x4) at (230:1.25cm);
\draw[fill=red, color=red] (x4) circle (1.75pt);
    \coordinate (x5) at (270:0.6cm);
\draw[fill=red, color=red] (x5) circle (1.75pt);
    \coordinate (x6) at (310:1.25cm);
\draw[fill=red, color=red] (x6) circle (1.75pt);

    \draw[color=red] (x1) -- (x2);
    \draw[color=red] (x1) -- (x3);
    \draw[color=red] (x2) -- (x3);
    \draw[color=red] (x5) -- (x2);
    \draw[color=red] (x5) -- (x3);
    \draw[color=red] (x5) -- (x4);
    \draw[color=red] (x5) -- (x6);
    \draw[color=red] (x4) -- (x6);
\draw [red] plot [smooth] coordinates {(x1) (150:1cm) (x4)};
\draw [red] plot [smooth] coordinates {(x1) (30:1cm) (x6)};

\node[color=black] at ($(0:0.4cm)+(x1)$) {$v$};

    \draw[color=black] (x1) -- ($(15:0.3cm)+(x1)$);
    \draw[color=black] (x1) -- ($(45:0.3cm)+(x1)$);
    \draw[color=black] (x1) -- ($(75:0.3cm)+(x1)$);
    \draw[color=black] (x1) -- ($(105:0.3cm)+(x1)$);
    \draw[color=black] (x1) -- ($(135:0.3cm)+(x1)$);
    \draw[color=black] (x1) -- ($(165:0.3cm)+(x1)$);

\node[color=black] at (0,0) {$T$};

\node[color=black] at (-2.1cm,-1cm) {$F_1$};
\node[color=black] at (-2.1cm,1cm) {$F_2$};

\draw[fill=red, color=red] (x1) circle (1.75pt);

\end{tikzpicture}

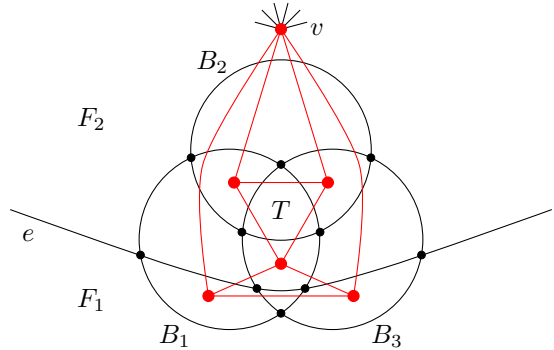
\captionof{figure}{The graph $\tilde{G}$}
\end{center}
It is clear that $\mathcal{R}_b(G')$ and $\mathcal{R}_w(G')$ are simple. 
Denote by $T$ the face of $\tilde{G}$ which is contained in $F_2$ and does not contain $v$. 
Note that $B_1, B_2, B_3$ induce central circuits of $G'$. 
Each zigzag of $\mathcal{R}_b(G')$ passing through $T$ corresponds to one of $B_i$. 
Observe that $F$ is the face of $\mathcal{R}_b(G')$ and the zigzags corresponding to $B_1, B_2, B_3$ do not contain edges of this face. 
This means that the $z$-monodromy of $F$ in $\mathcal{R}_b(G')$ is also $\sigma$. 

Consider any graph $\Gamma'$ embedded in $S$ that contains a triangle face $T'$. 
We take the connected sum of the sphere containing $\mathcal{R}_b(G')$ and $S$ by removing the interiors of faces $T$ and $T'$ and identifying theirs boundaries by a homeomorphism that sends vertices to vertices. 
We come to a new graph embedded in $S$ containing $F$ as a face. 
Since every zigzag of $\mathcal{R}_b(G')$ containing an edge of $F$ does not pass through any edge of $T$, the $z$-monodromy of $F$ in the new graph is the same as in $\mathcal{R}_b(G')$ and, consequently, as in $\mathcal{R}_b(G)$. 


\begin{thebibliography}{99}

\bibitem{BD}
Brinkmann G., Dress, A. W. M.,
{\it PentHex puzzles. A reliable and efficient top-down approach to fullerene-structure enumeration},
Adv. Appl. Math. 21(1998), 473--480.

\bibitem{CrRos}
Crapo H., Rosenstiehl P., 
{\it On lacets and their manifolds}, 
Discrete Math. 233 (2001), 299--320.

\bibitem{DDS-book} Deza M., Dutour Sikiri\'c M., Shtogrin M.,
{\it Geometric Structure of Chemistry-relevant Graphs: zigzags and central circuit}, 
Springer 2015. 

\bibitem{TriApp}
Hjelle Ø., Dæhlen M., {\it Triangulations and Applications}, Springer 2006. 

\bibitem{GR-book}
Godsil C., Royle G., 
{\it Algebraic Graph Theory}, 
Graduate Texts in Mathematics 207, Springer 2001.

\bibitem{LZsurf}
Lando S. K., Zvonkin A. K., 
{\it Graphs on Surfaces and Their Applications}, 
Encyclopaedia of Mathematical Sciences, Springer, Berlin (2004)

\bibitem{Lins1} 
Lins S., {\it Graph-encoded maps}, J. Combin. Theory, Ser. B 32(1982), 171--181.

\bibitem{Lins2} 
Lins S., Oliveira-Lima E., Silva V., 
{\it A homological solution for the Gauss code problem in arbitrary surfaces}, 
J. Combin. Theory, Ser. B 98(2008), 506--515.

\bibitem{PT1} 
Pankov M., Tyc A., {\it Connected sums of z-knotted triangulations}, 
Euro. J. Comb. 80 (2019), 326--338.

\bibitem{PT3} 
Pankov M., Tyc A., {\it On two types of $z$-monodromy in triangulations of surfaces},
Discrete Math. 342 (2019), 2549--2558.

\bibitem{PT2}
Pankov M., Tyc A., {\it $z$-Knotted Triangulations of Surfaces}, Discrete Comput. Geom. 66 (2021), 636--658. 

\bibitem{Shank} Shank H., 
{\it The theory of left-right paths} in Combinatorial Mathematics III,
Lecture Notes in Mathematics 452, Springer 1975, 42--54.

\bibitem{T3}
Tyc A., {\it Zigzags in combinatorial tetrahedral chains and the associated Markov chain}, 
arXiv: 2206.09830 (submitted). 

\bibitem{T2}
Tyc A., {\it  $Z$-knotted and z-homogeneous triangulations of surfaces}, 
Discrete Math. 344 (2021), 112405.

\bibitem{T1}
Tyc A., {\it $Z$-oriented triangulations of surfaces}, 
Ars Math. Contemp. 22 (2022), p. \#1.02.

\end{thebibliography}
\end{document}